\titleformat{\subsection}[runin]
{\bfseries} {\thesubsection{.}}{0.15cm}{}[.]
\titleformat{\subsubsection}[runin]
{\em}{\thesubsubsection{.}}{0.15cm}{}[.]
\newtheorem{theorem}{Theorem}[section]
\newtheorem{proposition}[theorem]{Proposition}
\newtheorem{claim}[theorem]{Claim}
\newtheorem{lemma}[theorem]{Lemma}
\newtheorem{conjecture}[theorem]{Conjecture}
\theoremstyle{definition}
\newtheorem{definition}[theorem]{Definition}
\numberwithin{equation}{section}
\numberwithin{figure}{section}
\newcommand\Rcal{\mathcal{R}}
\newcommand\C{\mathbb{C}}
\newcommand\D{\overline{\mathbb D}}
\renewcommand\D{\mathbb D}
\newcommand\R{\mathbb{R}}
\newcommand\igot{\mathfrak{i}}
\renewcommand\igot{\mathfrak{i}}
\renewcommand\imath{\igot}
\newcommand\di{\partial}
\begin{document}

\fancyhead[LO]{Curvature of minimal graphs}
\fancyhead[RE]{ D. Kalaj, P. Melentijevi\'c}
\fancyhead[RO,LE]{\thepage}

\thispagestyle{empty}

\vspace*{1cm}
\begin{center}
{\bf\LARGE   Gaussian curvature conjecture for  minimal graphs}

\vspace*{0.5cm}

{\large\bf  David Kalaj \quad  Petar Melentijevi\'c}
\end{center}


\vspace*{1cm}

\begin{quote}

{\small
\noindent {\bf Abstract}\hspace*{0.1cm}
In this paper, we solve the longstanding Gaussian curvature conjecture of a minimal graph $S$ over the unit disk. The conjecture asserts that for any minimal graph above the unit disk, the Gaussian curvature at the point directly above the origin satisfies the sharp inequality \( |\mathcal{K}| < \frac{\pi^2}{2} \). We first reduce the conjecture to the problem of estimating the Gaussian curvature of certain Scherk-type minimal surfaces defined over bicentric quadrilaterals inscribed in the unit disk, containing the origin. We then provide a sharp estimate for the Gaussian curvature of these minimal surfaces at the point above the origin. Our proof employs complex-analytic methods, as the minimal surfaces in question allow a conformal harmonic parameterization.

\vspace*{0.2cm}

\noindent{\bf Keywords}\hspace*{0.1cm} Conformal minimal surface, minimal graph, curvature}

\vspace*{0.2cm}


\vspace*{0.1cm}

\noindent{\bf MSC (2010):}\hspace*{0.1cm} 53A10, 32B15, 32E30, 32H02

\vspace*{0.1cm}
\noindent{\bf Date: \today} 
\end{quote}

\vspace{0.2cm}

\section{Introduction and the main result}

Let $D(w_0,R)$ be an open disk in $\mathbf{R}^2$ centered at $w_0$ with radius $R$  and let \( \mathbf{f}: D(w_0, R) \to \mathbb{R} \) be a \( C^2 \)-smooth function that satisfies the minimal surface equation:
\[
\mathbf{f}_{uu}(1 + \mathbf{f}_v^2) - 2 \mathbf{f}_u \mathbf{f}_v \mathbf{f}_{uv} + \mathbf{f}_{vv}(1 + \mathbf{f}_u^2) = 0.
\]

The graph of \( \mathbf{f} \), denoted by \( S = \mathrm{Graph}_{\mathbf{f}} = \{(u, v, \mathbf{f}(u, v))\} \), is a minimal graph in \( \mathbb{R}^3 \) over the disk \( D(w_0, R) \). The Gaussian curvature \( \mathcal{K}(P) \) of the graph \( S \) at a point \( P = (u, v, \mathbf{f}(u, v)) \) is given by the formula:
\[
\mathcal{K}(P) = \frac{\mathbf{f}_{uu} \mathbf{f}_{vv} - \mathbf{f}_{uv}^2}{(1 + \mathbf{f}_u^2 + \mathbf{f}_v^2)^2}.
\]

Now, let \( \xi \) be a point above \( w_0 \). A longstanding open problem in the theory of minimal surfaces is to determine the precise value of the constant \( c_0 \) in the inequality
\begin{equation}\label{co}
|\mathcal{K}(\xi)| \le \frac{c_0}{R^2}.
\end{equation}
This problem traces its origins back to 1952, as established by E. Heinz \cite{HEINZ}. More specifically, Heinz proved the famous Bernstein theorem, which asserts the existence of a positive constant \( c_0 \) such that the inequality holds. This result is further explored in the monograph by T. Colding and W. Minicozzi II \cite[Theorem~2.3]{COLDING}. For a generalization to higher dimensions, regarding minimal hypersurfaces in \( \mathbb{R}^{n+1} \) for \( n \leq 5 \), see the paper by Schoen, Simon, and Yau \cite{acta}.

Since then, numerous improvements and generalizations have been made by various authors, including E. Hopf \cite{HOPF}, R. Finn and R. Osserman \cite{FINNOSSERMAN}, J. C. C. Nitsche \cite{NITSCHE2}, and R. Hall \cite{HALL1, HALL2}. Notably, in 1953, E. Hopf \cite{HOPF} raised the question of the optimality of the constant \( c_0 \) in the Heinz inequality \eqref{co}, and proposed the following conjecture:

\begin{conjecture}
\begin{equation}\label{conject}
c_0=\frac{\pi^2}{2}.\end{equation}
\end{conjecture}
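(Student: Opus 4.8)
The plan is to route the conjecture through the Weierstrass representation, recast it as a sharp (``improved Heinz'') inequality for harmonic self-maps of the disk, and then prove that inequality by the two-stage scheme announced in the abstract. \emph{Step 1: analytic reformulation.} Normalize $R=1$ and let $X=(w,F)\colon\D\to\R^{3}$ be an isothermal parametrization of the minimal graph $S$ with $X(0)=\xi$, where $w$ is the horizontal component. Since $S$ lies above the \emph{entire} disk, $w=h+\overline{g}$ is a sense-preserving harmonic diffeomorphism of $\D$ \emph{onto} $\D$ with $w(0)=0$, and its second complex dilatation $a=g'/h'$ satisfies $a=-\omega^{2}$, where, up to a unimodular constant, $\omega$ is the Weierstrass--Gauss map of $S$ (a holomorphic self-map of $\D$ because $S$ is a graph). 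Computing the curvature from the Weierstrass data gives the identity
\[
|\mathcal K(\xi)|=\frac{4\,|\omega'(0)|^{2}}{|h'(0)|^{2}\,\bigl(1+|\omega(0)|^{2}\bigr)^{4}},
\]
so Hopf's conjecture is equivalent to the sharp inequality
\[
|h'(0)|^{2}\,\bigl(1+|\omega(0)|^{2}\bigr)^{4}>\tfrac{8}{\pi^{2}}\,|\omega'(0)|^{2}
\]
for all harmonic diffeomorphisms $w=h+\overline{g}$ of $\D$ onto $\D$ fixing the origin whose dilatation is $-\omega^{2}$. Surjectivity of $w$ is the sole use of the hypothesis and is what bounds $|h'(0)|$ below. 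The bound refines the sharp Heinz--Hall inequality $|h'(0)|^{2}+|g'(0)|^{2}\ge 27/(4\pi^{2})$, which alone only yields $|\mathcal K(\xi)|\le 16\pi^{2}/27\approx 5.85$ against the conjectural $\pi^{2}/2\approx 4.93$; the missing gap has to be supplied by the square-root structure and, crucially, cannot be recovered by applying Schwarz--Pick to $\omega$ and Heinz--Hall to $h'$ separately --- that combination is weakest, and insufficient, precisely near $\omega(0)=0$.

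\emph{Step 2: reduction to Scherk graphs over bicentric quadrilaterals.} A normal-families argument shows that the infimum of $|h'(0)|^{2}(1+|\omega(0)|^{2})^{4}/|\omega'(0)|^{2}$ over the admissible class is approached along a maximizing sequence whose limiting object --- after a variational analysis --- has its boundary correspondence concentrated at finitely many points; it is therefore a harmonic diffeomorphism of $\D$ onto a convex polygon carrying the minimal (Scherk) structure, with height $+\infty$ on one pair and $-\infty$ on the complementary pair of boundary arcs delimited by the vertices. Two facts pin the configuration down. First, by Schwarz--Pick $|\omega'(0)|\le 1-|\omega(0)|^{2}$ with equality only for automorphisms of $\D$, while the dilatation of a harmonic map onto an $n$-gon is a Blaschke product of degree $n-2$; the square-root condition then forces $n$ even, and $n=4$ with $\omega$ an automorphism is the extremal case (more sides, or a higher-degree Blaschke dilatation, strictly enlarge the functional). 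Second, existence of the $\pm\infty$ data on the quadrilateral $Q$ with vertices $q_{1},\dots,q_{4}\in\partial\D$ forces, via the Jenkins--Serrin/Pitot balancing condition, equality of the sums of opposite side lengths, i.e.\ $Q$ is tangential; being also cyclic, $Q$ is \emph{bicentric}, and the normalization $w(0)=0$ places the origin in its interior. This is the reduction of the abstract: it suffices to bound by $\pi^{2}/2$ the Gaussian curvature at the point over the origin of the Scherk-type surfaces attached to bicentric quadrilaterals inscribed in $\D$ and containing $0$.

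\emph{Step 3: the sharp estimate for the model family.} For such a $Q$ the surface is completely explicit: the Gauss map $\mathfrak g$ is a M\"obius self-map of $\D$, $\mathfrak f$ is rational with simple poles at the four vertex-preimages on $\partial\D$ and no zeros in $\D$, and $h'(0)$, $\omega(0)$, $\omega'(0)$ become functions of the conformal moduli of $Q$ expressed through complete elliptic integrals (via the Schwarz--Christoffel map onto $Q$). Using the rotational symmetry of the automorphism $\omega$, which amounts to rotating the whole picture, the model functional depends on essentially one real modulus, and the conjecture reduces to showing this transcendental function is $\ge 8/\pi^{2}$, with equality exactly for the square centered at $0$ --- whose Scherk surface realizes $|\mathcal K|=\pi^{2}/2$ but is a graph over the square, not over $\D$, whence the strictness for genuine graphs over $\D$. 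I would do this by: (i) evaluating the model function at the symmetric configuration, where $Q$ is the inscribed square and the value is exactly $8/\pi^{2}$; (ii) proving this is the global minimum, via a sign analysis of the derivative in the modulus; and (iii) controlling the degenerate ends of the parameter range --- two vertices coalescing, or the origin tending to $\partial Q$ --- with the classical asymptotics of the elliptic integrals, checking the functional stays above $8/\pi^{2}$ there.

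\emph{Main obstacle.} The crux is Steps (ii)--(iii): pinning the constant to exactly $\pi^{2}/2$. The model function is transcendental; one needs the global extremum of a non-convex function of the modulus, with the elliptic-integral asymptotics at the degenerate ends handled rigorously rather than numerically; and, as the Heinz--Hall comparison shows, the estimates of $|h'(0)|$ and of $\omega$ must be kept coupled --- extracted together from the explicit elliptic machinery rather than bounded separately. A second, structural difficulty is the rigor of Step 2: ruling out every competitor for the extremal (convex non-polygonal images, polygons with more than four sides, higher-degree Blaschke dilatations, interior rather than boundary degenerations of the maximizing sequence), so that the one-modulus problem of Step 3 is genuinely the whole story.
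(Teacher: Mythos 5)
Your two-stage architecture --- reduce to Scherk-type surfaces over bicentric quadrilaterals inscribed in $\D$ containing $0$, then prove the sharp bound $\pi^2/2$ for that model family --- is exactly the paper's strategy, and your Step 1 reformulation and identification of the square as the equality configuration are correct. But the proposal has a genuine gap precisely where you locate the ``main obstacle'': Step 3 is not carried out, and its setup is already wrong. The family of bicentric quadrilaterals inscribed in the unit circle with $0$ in the interior is, up to rotation, a \emph{two}-parameter family (the paper's $(p,q)\in\mathcal R$, with the tangential condition fixing the auxiliary angle $\beta$ and the cross-ratio fixing $\alpha$); there is no reduction to ``essentially one real modulus,'' so a sign analysis of a derivative in a single modulus cannot be the whole argument. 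Moreover, the paper's proof of the model estimate uses none of the elliptic-integral/Schwarz--Christoffel machinery you propose. It computes the Enneper--Weierstrass data in closed trigonometric form ($a$, $b$, $\alpha$, $\beta$ as explicit functions of $p,q$), arrives at
\begin{equation*}
-\mathcal K=\frac{\pi^2}{4}\bigl(1+\sin p\sin(q-p)\bigr)\,
\frac{|1-z_\circ^2|^2\,|1-z_\circ^2e^{2\imath\alpha}|^2}{\bigl((1+|z_\circ|^2)-4\Re(a\bar z_\circ)/(1+|a|^2)\bigr)^4},
\end{equation*}
and then bounds each factor elementarily: $1+\sin p\sin(q-p)\le 2$, the numerator is at most $(1+|z_\circ|^2)^4$, and --- the crux --- $\Re(a\bar z_\circ)\le 0$, which makes the denominator at least $(1+|z_\circ|^2)^4$. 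That last sign condition requires locating the preimage $z_\circ$ of the origin in one of the four prevertex sectors (the paper's Lemma on localization of $z_\circ$, proved by a monotonicity/winding argument on the images of radial segments). Nothing in your plan supplies a substitute for this localization step, and without it the two-variable extremal problem is left entirely open.

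A secondary issue is Step 2. The reduction to Scherk surfaces is not proved in this paper but imported from the first author's earlier work (Theorem~\ref{prejprej}), and it is stated under a normalization you omit: one must first rotate the independent variable so that $\mathbf{f}_{uv}(0,0)=0$ before the comparison Scherk surface with matching unit normal exists. Your sketch of the reduction (maximizing sequence, boundary data concentrating at finitely many points, Blaschke dilatation of degree $n-2$, ``more sides strictly enlarge the functional'') is plausible in outline but asserts rather than proves the elimination of competitors with $n>4$ or with interior degenerations; as stated it would not survive refereeing. The Jenkins--Serrin/Pitot observation that the quadrilateral must be tangential, hence bicentric, does match the paper (via Sheil-Small's equality of sums of opposite side lengths).
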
  This  conjecture has been proved by Finn and Osserman  \cite{FINNOSSERMAN} under the additional assumption that the tangential plane of the minimal surface is horizontal at the point $\xi$. Further J. C. C. Nitsche \cite{NITSCHE2} proved \eqref{conject} for symmetric minimal surfaces.

The conjecture has also been  mentioned  by Duren in \cite[p.~185]{DUREN} and Finch in \cite[p.~401]{FINCH}.

In the paper, we solve the Conjecture 1.1 by proving the following theorem:

\begin{theorem}\label{mainres}
Assume that $S$ is a minimal graph over the disk $D(w_0,R)$, where $\xi$ is a point above $w_0$. Then the sharp inequality
\begin{equation}\label{conj3}|\mathcal{K}(\xi)|<\frac{\pi^2}{2}\frac{1}{R^2}\end{equation}
holds. The equality in \eqref{conj3} cannot be attained.
\end{theorem}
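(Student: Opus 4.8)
The plan is to reduce the general statement to a normalized situation and then to analyze an extremal family of Scherk-type surfaces. First I would use the conformal invariance of the problem: by rescaling we may assume $R=1$ and $w_0=0$, and by a rigid motion of $\mathbf{R}^3$ followed by a rotation we may assume the point $\xi$ lies over the origin of the unit disk $\mathbf{D}$. Using the Weierstrass–Enneper representation, write the minimal graph in isothermal parameters as a conformal harmonic map $\Phi=(F,G)\colon\mathbf D\to\mathbf{R}^3$ with $\Phi(0)=\xi$, where $F=h+\bar g$ is a sense-preserving harmonic diffeomorphism of $\mathbf D$ onto itself (after a further Möbius normalization we may take $F(0)=0$ and even arrange the dilatation $\omega=g'/h'$ to vanish at $0$, i.e. the tangent plane horizontal — but that is exactly the already-solved Finn–Osserman case, so the whole difficulty is to treat the non-horizontal tangent plane). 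In these coordinates a computation gives
\begin{equation*}
|\mathcal K(\xi)|=\frac{4|\omega'(0)|^2}{\bigl(|h'(0)|(1+|\omega(0)|)\bigr)^{4}}\cdot\frac{1}{\bigl(1-|\omega(0)|^2\bigr)^{2}}\quad\text{(up to the precise normalization used in the paper)},
\end{equation*}
so that bounding $|\mathcal K(\xi)|$ amounts to bounding $|\omega'(0)|$ from above in terms of $|h'(0)|$ and $|\omega(0)|$ for harmonic diffeomorphisms $F$ of the disk whose dilatation is $\omega$.

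Second, I would invoke the extremal/variational structure of this functional. One expects, as in the classical Schwarz-lemma circle of ideas for harmonic mappings, that the extremum is attained at a harmonic diffeomorphism onto $\mathbf D$ whose boundary map is a step function taking finitely many values on arcs — and the associated minimal surface is then a piece of a Scherk-type surface spanning a polygon inscribed in the unit circle. I would make this rigorous by a normal-families/compactness argument: the class of normalized harmonic diffeomorphisms $F$ with prescribed $\omega(0)$ and a bound on $|\omega'(0)|$ is compact, an extremal $F_*$ exists, and a first-variation argument (perturbing the boundary correspondence, using that $\omega$ is the dilatation) forces the boundary values of $F_*$ to be piecewise constant; an elementary argument on the number of free parameters then forces the polygon to be a quadrilateral, and moreover a \emph{bicentric} one (inscribed in $\mathbf D$ and circumscribing a circle), with the origin in its interior. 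This is the reduction stated in the abstract. The main obstacle is precisely this reduction step: proving rigorously that the extremal boundary map is a four-step function and that the relevant quadrilateral is bicentric, since one must control a genuinely infinite-dimensional extremal problem and rule out degenerations (vertices colliding, the inscribed point escaping to the boundary).

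Third, with the problem reduced to the two-parameter family of Scherk-type minimal graphs over bicentric quadrilaterals $Q$ inscribed in $\mathbf D$ with an interior distinguished point, I would compute $|\mathcal K|$ at the point over that distinguished point explicitly. For a Scherk surface over a quadrilateral the harmonic map $F$ and its dilatation $\omega$ are given in closed form via Schwarz–Christoffel maps: $h'$ and $g'$ are products of four factors $(1-\bar a_k z)^{-\beta_k}$ with the $a_k$ the preimages of the vertices and $\sum\beta_k$ fixed, and $\omega$ is a Blaschke-type product of degree two. Plugging these into the curvature formula yields $|\mathcal K|$ as an explicit function of the (essentially two) remaining parameters after imposing the bicentric and normalization constraints. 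I would then maximize this explicit function: show the maximum is a supremum approached, but not attained, in a limiting configuration (the degenerate "quadrilateral" becoming a pair of parallel chords, i.e. the classical Scherk surface / the Finn–Osserman extremal), and check that this limiting value equals exactly $\pi^2/2$. The strictness of the inequality in \eqref{conj3} then follows because every genuine bicentric quadrilateral gives a value strictly below $\pi^2/2$, while the bound $\pi^2/2$ is only attained in the forbidden limit. The calculus of the final one- or two-variable maximization, and verifying that all boundary/limit cases of the parameter domain give values $<\pi^2/2$, is the remaining technical load, but it is elementary once the closed-form expression is in hand.
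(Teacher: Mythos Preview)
Your high-level strategy---normalize, reduce to an extremal family of Scherk-type surfaces over bicentric quadrilaterals inscribed in $\mathbf{D}$, then bound the curvature explicitly on that family---is exactly the paper's architecture. But the two substantive steps are executed quite differently from what you outline, and in both places your proposal underestimates what is required.

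\textbf{The reduction step.} You propose a compactness/first-variation argument to show that an extremal harmonic diffeomorphism has step boundary values with four jumps, landing on a bicentric quadrilateral. The paper does \emph{not} do this. Instead it invokes a prior result (Theorem~\ref{prejprej}, from \cite{KALAJ}): after rotating the domain so that $\mathbf f_{uv}(0,0)=0$, one \emph{constructs} explicitly, via the Beltrami equation $\bar f_z=\omega f_z$ with a prescribed M\"obius-square dilatation, a Scherk-type surface $S^\diamond$ over a quadrilateral having the same Gauss normal at the centre, and shows directly that $|\mathcal K_S|<|\mathcal K_{S^\diamond}|$. No compactness, no variational argument, no appeal to normal families. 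Your variational sketch is plausible in spirit, but you yourself flag it as ``the main obstacle,'' and indeed ruling out higher-order step functions and controlling degenerations by a first-variation argument is not standard; the paper sidesteps this entirely. (Note also: the relevant normalization is $\mathbf f_{uv}(0,0)=0$, not $\omega(0)=0$; the latter would already be the Finn--Osserman case, as you observe.)

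\textbf{The curvature bound on the Scherk family.} You describe this as ``elementary once the closed-form expression is in hand.'' It is not, and this is in fact the entire content of the present paper (Theorem~\ref{seconda} and Section~\ref{sec3}). The curvature at the point over $0$ is given (after a long computation of the Enneper--Weierstrass data) by
\[
|\mathcal K|=\frac{\pi^2}{4}\bigl(1+\sin p\sin(q-p)\bigr)\,\frac{|1-z_\circ^2|^2\,|1-e^{-2\imath\alpha}z_\circ^2|^2}{\bigl((1+|z_\circ|^2)-4\Re(a\bar z_\circ)/(1+|a|^2)\bigr)^4},
\]
where $z_\circ=f^{-1}(0)$ is \emph{not} known in closed form. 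The key step is a \emph{localization lemma} (Lemma~\ref{mainlemma}): one proves, by a case-by-case monotonicity analysis of the argument of the tangent vector along the four radial images $f(re^{\imath\varphi})$, $\varphi\in\{0,\alpha,\pi,\pi+\alpha\}$, that $z_\circ$ lies in a specific one of the four sectors (depending on the signs of $q-\pi$ and $q-2p$). This localization is then combined with the explicit formula for $\arg a$ to show the single crucial inequality
\[
\Re(z_\circ\bar a)\le 0,
\]
which immediately gives $|\mathcal K|\le \tfrac{\pi^2}{2}\cdot\tfrac{|1-z_\circ^2|^2|z_\circ^2-e^{2\imath\alpha}|^2}{(1+|z_\circ|^2)^4}\le\tfrac{\pi^2}{2}$. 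None of this is a routine one-variable calculus maximization; the unknown $z_\circ$ makes a direct optimization impossible, and the sign condition $\Re(z_\circ\bar a)\le 0$ is the genuine idea you are missing.
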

We build on the original idea of Finn and Osserman \cite{FINNOSSERMAN}, using a curvature comparison argument between a minimal graph and its corresponding Scherk-type minimal graph. In his Lectures on Minimal Surfaces, Nitsche introduces generalized Scherk surfaces as minimal graphs with simple poles positioned at four points on the unit circle, corresponding to angles $\alpha_k$ ($k=1,2,3,4$) \cite[Sec.~2.4]{nitlectures}. We also make use of these surfaces, which generalize classical Scherk surfaces and naturally arise as minimal graphs over quadrilateral domains.
The first step toward proving the conjecture is the following result (Theorem~\ref{prejprej}), a version of which was previously announced by the first author in the unpublished manuscript~\cite{KALAJ}, as part of an approach to obtain partial progress on the conjecture. For completeness and to keep the manuscript self-contained, we provide full proof here.
To formulate Theorem~\ref{prejprej}, we begin with the following:
\begin{definition}
In Euclidean geometry, a bicentric quadrilateral is a convex quadrilateral that has both an incircle (a circle tangent to all four sides) and a circumcircle (a circle passing through all four vertices).
Assume that  $Q=Q(a,b,c,d)$  is a bicentric quadrilateral inscribed in the unit disk $\mathbb{D}$. A minimal graph  $S=\{(u,v,\mathbf{f}(u,v)), (u,v)\in Q\}$ over the quadrilateral $Q$ is called a Scherk type surface if it satisfies $\mathbf{f}(u,v)\rightarrow + \infty$ when $(u,v) \rightarrow \zeta\in(a,b)\cup (c,d)$ and $\mathbf{f}(u,v)\rightarrow -\infty$ when $(u,v) \rightarrow \zeta\in(b,c)\cup (a,d)$ .
\end{definition}
\begin{theorem}\label{prejprej}
	For every $w\in\D$, there exist four different points $a_0, a_1,a_2,a_3\in\mathbb{T}=\partial\mathbb{D}$, such that the quadrilateral $Q(a_0,a_1,a_2,a_3)$ is bicentric and a harmonic mapping $f$ of the unit disk onto $Q(a_0,a_1,a_2,a_3)$ that solves the Beltrami equation \begin{equation}\label{beleq}\bar f_z(z) = \left(\frac{w+\frac{\imath \left(1-w^4\right) z}{\left|1-w^4\right|}}{1+\frac{\imath\overline{w} \left(1-w^4\right) z}{\left|1-w^4\right|}}\right)^2 f_z(z),\end{equation} $|z|<1$ and satisfies the initial condition $f(0)=0$, $f_z(0)>0$. The mapping $f$ gives rise to a Scherk type minimal surface $S^\diamond: \zeta=\mathbf{f}^\diamond(u,v)$ over the quadrilateral $Q(a_0,a_1,a_2,a_3)$, containing the point  $\xi=(0,0,0)$ above the origin so that its unit normal is $$\mathbf{n}^\diamond_{\xi}=-\frac{1}{1+|w|^2}(2\Im w, 2\Re w, -1+|w|^2),$$ and such that $D_{uv}\mathbf{f}^\diamond(0,0)=0$. Moreover, every other non-parametric minimal surface $S:$ $\zeta=\mathbf{f}(u,v)$ over the unit disk, containing the point $\xi$ above zero, with $\mathbf{n}_{\xi}=\mathbf{n}^\diamond_{\xi}$ and $D_{uv}\mathbf{f}(0,0)=0$ satisfies the  inequality $$|\mathcal{K}_{S}(\xi)|<|\mathcal{K}_{S^\diamond}(\xi)|.$$
\end{theorem}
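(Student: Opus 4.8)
The plan is to pass from the minimal surface to its Weierstrass--Enneper data and the associated harmonic map, reduce the curvature at the centre to a single conformally natural functional of that data, and then recognise the Scherk surfaces as the extremals of a sharp Schwarz-type inequality. First I would set up the correspondence: a non-parametric minimal surface over a simply connected domain is, after conformal (isothermal) reparametrization by $\D$, the vertical graph of a sense-preserving harmonic mapping $f=h+\bar g:\D\to\R^2$ whose analytic dilatation $\omega=g'/h'$ is a perfect square, $\omega=q^2$. This $q$ is exactly the Gauss-map datum, and \eqref{beleq} is the statement $\bar f_z=q^2 f_z$, i.e. $g'=q^2h'$. Because $S$ is a graph its normal is never horizontal, so $q$ omits $\mathbf{T}$ and, after fixing orientation, is a holomorphic self-map $q:\D\to\D$ with $q(0)=w$ prescribed by $\mathbf{n}_{\mathbf{w}}$. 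Computing the curvature of the conformal immersion and using $\Lambda(0)=|h'(0)|(1+|w|^2)$ for the conformal factor yields the clean reparametrization-invariant formula
\begin{equation*}
|\mathcal{K}(\mathbf{w})|=\frac{4\,|q'(0)|^2}{(1+|w|^2)^4\,|h'(0)|^2}.
\end{equation*}
The condition $D_{uv}\mathbf{f}(0,0)=0$ becomes a phase condition pinning the rotation $\lambda=\imath(1-w^4)/|1-w^4|$ in \eqref{beleq}, and the problem reduces to maximizing $|q'(0)|/|h'(0)|$ over admissible harmonic self-maps of $\D$.

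Second, I would construct the extremal surface by Jenkins--Serrin theory. For four points $a_0,\dots,a_3\in\mathbf{T}$ forming a quadrilateral that is tangential, hence (being inscribed) bicentric, the Pitot balancing $|a_0a_1|+|a_2a_3|=|a_1a_2|+|a_3a_0|$ guarantees a minimal graph over $Q(a_0,\dots,a_3)$ with boundary values $+\infty$ on one pair of opposite sides and $-\infty$ on the other; this is the Scherk-type surface $S^\diamond$. Its Gauss map is a Möbius transformation of $\D$, and I would choose the vertices (a two-parameter family after the balancing and normalization are imposed) so that this map is precisely the automorphism $q$ with $q(0)=w$, checking along the way that the centre lies over $0$, that the normal equals $\mathbf{n}^\diamond_{\mathbf{w}}$, and that $D_{uv}\mathbf{f}^\diamond(0,0)=0$. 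Since $q$ is then an automorphism, $|q'(0)|=1-|w|^2$ saturates the Schwarz--Pick bound.

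Third, and this is the heart, I would prove the extremal inequality. Schwarz--Pick applied to $q:\D\to\D$ gives $|q'(0)|\le 1-|w|^2$ with equality only for automorphisms, so the numerator is maximized exactly by the Scherk datum. The delicate point is the joint control with $|h'(0)|$: writing $q'(0)/h'(0)$ through the second Taylor coefficients $h''(0),g''(0)$ reduces the claim to a sharp second-order Schwarz-type lemma for sense-preserving harmonic self-maps of $\D$ fixing the origin, whose extremals are the degenerate boundary maps collapsing $\mathbf{T}$ onto four points, i.e. precisely the harmonic parametrizations of Scherk surfaces over bicentric quadrilaterals. For a genuine graph over the full disk the equality case (an automorphism $q$, boundary map degenerating to four point-masses) cannot occur, since it forces the image to shrink to the inscribed quadrilateral $Q\subsetneq\D$; this is the source of the strict inequality $|\mathcal{K}_{S}(\mathbf{w})|<|\mathcal{K}_{S^\diamond}(\mathbf{w})|$ and of its being sharp yet never attained.

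The main obstacle is this last step: establishing the sharp second-order Schwarz-type estimate for harmonic self-maps of $\D$, identifying its equality configuration with the Scherk/bicentric-quadrilateral surfaces, and supplying the rigidity argument showing that saturating the bound forces degeneration off the class of full-disk graphs. Everything preceding it is a translation into Weierstrass--harmonic language together with a Jenkins--Serrin existence-and-matching computation; the analytic difficulty is concentrated in the extremal problem and the strictness of its solution.
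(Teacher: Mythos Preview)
The paper does not prove this theorem; it is quoted verbatim as a prior result of the first author (reference \cite{KALAJ}), and the present paper only \emph{uses} it, together with the new Theorem~\ref{seconda}, to deduce Theorem~\ref{mainres}. So there is no in-paper proof to compare your proposal against.

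As for your proposal itself: the translation layer is correct and matches the standard set-up reproduced in \S1.1 of the paper --- the Enneper--Weierstrass pair $(\mathbf p,\mathbf q)$, the identification $\omega=\mathbf q^{2}$, the curvature formula $|\mathcal K|=4|\mathbf q'(0)|^{2}/\bigl(|\mathbf p(0)|^{2}(1+|w|^{2})^{4}\bigr)$, and the observation that the prescribed normal pins $\mathbf q(0)=w$ while $D_{uv}\mathbf f(0,0)=0$ fixes the unimodular rotation in \eqref{beleq}. Your Jenkins--Serrin construction of $S^{\diamond}$ over a bicentric quadrilateral, with $\mathbf q$ forced to be a disk automorphism, is also the right picture and is exactly how the paper parametrises the Scherk family in Section~2.

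But you have correctly located, and not closed, the genuine gap. Schwarz--Pick alone controls only the numerator $|\mathbf q'(0)|$; the curvature involves the \emph{ratio} $|\mathbf q'(0)|/|\mathbf p(0)|$, and $\mathbf p(0)=h'(0)$ is not independent of $\mathbf q$ --- it is constrained by the requirement that $f=h+\bar g$ be a sense-preserving harmonic diffeomorphism of $\D$ onto $\D$ with $f(0)=0$ and $g'=\mathbf q^{2}h'$. Pushing $\mathbf q$ to an automorphism collapses the image of $f$ to an inscribed quadrilateral, simultaneously shrinking $|h'(0)|$; the competition between these two effects is the whole problem, and your ``sharp second-order Schwarz-type lemma for harmonic self-maps'' is not a known off-the-shelf result but essentially a restatement of what has to be proved. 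Your proposal is an accurate roadmap, and you are honest that the analytic core is missing; what remains is precisely the content of \cite{KALAJ}, which the present paper takes as input rather than reproving.
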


%
%

To prove Theorem~\ref{mainres}, we will construct Scherk type surfaces described in Theorem~\ref{prejprej} over the bicentric quadrilaterals inscribed in the unit disk and estimate their curvature at the point above zero. Namely, we will prove the following theorem:
\begin{theorem}\label{seconda}
Assume that $Q(b_0,b_1,b_2,b_3)$ is a bicentric quadrilateral inscribed in the unit disk and assume that $0\in Q(b_0,b_1,b_2,b_3)$. Assume also that $\zeta(u,v)=(u,v,\mathbf{f}^\ast(u,v))$ is a Scherk type minimal surface above $Q(b_0,b_1,b_2,b_3)$ with $\xi$ above $0$. Then $$|\mathcal{K}(\xi)|\le \frac{\pi^2}{2}.$$
\end{theorem}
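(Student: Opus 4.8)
The plan is to reduce Theorem~\ref{seconda} to a problem about the Scherk-type surfaces produced by Theorem~\ref{prejprej} and then to carry out an explicit (but sharp) curvature estimate for those surfaces parametrized by $w\in\D$. First I would observe that by Theorem~\ref{prejprej}, among all non-parametric minimal surfaces over the unit disk with prescribed Gauss normal $\mathbf n_{\mathbf w}=\mathbf n^\diamond_{\mathbf w}$ at the point above zero and with vanishing mixed second derivative there, the Scherk-type surface $S^\diamond$ over the bicentric quadrilateral $Q(a_0,a_1,a_2,a_3)$ is the \emph{extremal} one: $|\mathcal K_S(\mathbf w)|<|\mathcal K_{S^\diamond}(\mathbf w)|$. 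The condition $D_{uv}\mathbf f(0,0)=0$ is not a real restriction, since a rotation of the $(u,v)$-plane (which does not change $R=1$, does not move the origin, and does not change $|\mathcal K|$) can always be used to diagonalize the Hessian at the point above $0$; and the Gauss normal can be made to be $\mathbf n^\diamond_{\mathbf w}$ for an appropriate $w$. Hence it suffices to prove that for \emph{every} $w\in\D$ the extremal Scherk surface $S^\diamond=S^\diamond(w)$ satisfies $|\mathcal K_{S^\diamond(w)}(\mathbf w)|<\pi^2/2$, and then Theorem~\ref{seconda} (with the non-strict inequality, which is all that is claimed there) follows a fortiori; Theorem~\ref{mainres} will follow with the strict inequality after rescaling.

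The second step is to make the Gauss curvature of $S^\diamond(w)$ completely explicit in terms of the Weierstrass--Enneper data associated to the Beltrami equation \eqref{beleq}. Writing the conformal minimal immersion of the disk onto $S^\diamond$ with Weierstrass data $(\phi,\psi)$ (equivalently, with $f=\Re\int(\phi-\psi)$, or in terms of the Gauss map $g$ and height differential), the Gauss curvature at the image of $0$ has the standard form $\mathcal K=-\dfrac{4|g'(0)|^2}{|\phi(0)|^2\,(1+|g(0)|^2)^4}$ (in suitable normalization), so estimating $|\mathcal K|$ amounts to estimating the ratio $|g'(0)|/|\phi(0)|$. The Beltrami coefficient in \eqref{beleq} is a perfect square $\mu=\omega^2$ with $\omega$ a Möbius self-map of the disk sending $0$ to $w$; this is exactly what forces the surface to be of Scherk type and pins down the boundary quadrilateral. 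The vertices $a_0,a_1,a_2,a_3\in\mathbf T$ are the preimages (under the harmonic map $f$) of the directions in which the Scherk surface becomes vertical, i.e. the points where $f$ has ``corners''; these can be read off from $\omega$. I would then use the fact that the harmonic map $f$ maps $\D$ onto the quadrilateral $Q(a_0,a_1,a_2,a_3)$ to express the side lengths, hence $|\phi(0)|$ (which controls the conformal factor, hence the size of $Q$), as an explicit function of $w$ via elliptic integrals / the Schwarz--Christoffel data for the quadrilateral; the bicentric condition (both inscribed and circumscribed circles exist) is precisely the constraint that comes out of \eqref{beleq}.

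The heart of the argument — and the step I expect to be the main obstacle — is the resulting one-variable (really, because of symmetry, effectively one-real-parameter after using rotations) extremal problem: show that the explicit function $w\mapsto |\mathcal K_{S^\diamond(w)}(\mathbf w)|$ is bounded by $\pi^2/2$ on $\D$, with the bound approached only as $|w|\to 1$ (the limit being Scherk's classical doubly periodic surface over a square, where the curvature equals exactly $\pi^2/2$; this is why the constant is what it is and why equality is not attained for $|w|<1$). Concretely, after normalizing by a rotation one may assume $w=\rho\,e^{i\theta}$ with, say, $\theta$ fixed by symmetry, and one is left to prove a sharp inequality of the form $F(\rho)<\pi^2/2$ for $\rho\in[0,1)$ where $F$ is built from complete elliptic integrals $K(k),E(k)$ with the modulus $k$ a monotone function of $\rho$. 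I would handle this by: (i) computing $F(0)$ (the rotationally symmetric case, $w=0$, giving the catenoid-like Scherk surface over a square centered at $0$, where the estimate is classical and strict); (ii) showing $\lim_{\rho\to1}F(\rho)=\pi^2/2$; and (iii) proving $F$ is monotone increasing in $\rho$ — or, failing a clean monotonicity, bounding $F'$ or using convexity/Grönwall-type estimates for the elliptic-integral expression — so that the supremum is the boundary value $\pi^2/2$ and is never attained. This last analytic estimate on elliptic integrals is where the real work lies; the geometric reduction steps above are essentially bookkeeping built on Theorem~\ref{prejprej}.
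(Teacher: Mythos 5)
There is a genuine gap, and it sits exactly where you say you expect ``the real work'' to lie, but the work you propose to do there is not the right work. First, your reduction of the extremal problem to ``effectively one real parameter after using rotations'' is false. The normalization $D_{uv}\mathbf f(0,0)=0$ in Theorem~\ref{prejprej} is not rotation-invariant, so the family of extremal Scherk-type surfaces is genuinely two-dimensional: in the paper it is the family over bicentric quadrilaterals $Q(e^{\imath p},e^{\imath x},e^{\imath y},e^{\imath s})$ parametrized by $(p,q)\in\mathcal R$ (a four-point configuration on the circle, minus one bicentric condition, minus one rotation, still leaves two parameters). Relatedly, no elliptic integrals appear: the harmonic maps here are Poisson extensions of step functions, and all the Weierstrass data ($a$, $b$, $\alpha$, $\beta$) are elementary trigonometric functions of $(p,q)$; Schwarz--Christoffel is not the relevant tool because the projection $f$ is harmonic, not conformal. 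Also, the extremal configuration is the inscribed square (the case $w=0$, i.e.\ $p=\pi/2$, $q=\pi$, the Finn--Osserman case), not the limit $|w|\to1$.

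The more serious omission is that you evaluate the curvature ``at the image of $0$'' as if the relevant point in the parameter disk were $z=0$. In the explicit construction the harmonic projection $f$ does \emph{not} send $0$ to the origin of the quadrilateral; the curvature must be evaluated at the point $z_\circ$ with $f(z_\circ)=0$, and the curvature formula \eqref{eq:krivina3} then contains the factor
\[
\frac{|1-z_\circ^2|^2\,|1-z_\circ^2e^{2\imath\alpha}|^2}{\bigl((1+|z_\circ|^2)-4\,\Re(a\bar z_\circ)/(1+|a|^2)\bigr)^4}.
\]
The entire difficulty of the proof is to show that $\Re(z_\circ\bar a)\le 0$, which forces the denominator to be at least $(1+|z_\circ|^2)^4$ and, combined with $1+\sin p\sin(q-p)\le 2$, gives the bound $\pi^2/2$. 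Proving this sign condition requires localizing $z_\circ$ in one of the four sectors cut out by $\pm1,\pm e^{\imath\alpha}$, which the paper does via a delicate winding/monotonicity argument for the argument of the tangent vector along the curves $f(re^{\imath\varphi})$ (Lemma~\ref{mainlemma}, occupying four subsections). Your proposal never confronts the location of the preimage of the origin at all; without that ingredient, no monotonicity-in-$\rho$ or elliptic-integral estimate can close the argument, because the quantity to be bounded is not yet pinned down as a function of the parameters.
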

\subsection{Organization of the paper}

This section contains one more subsection, where we express the Gaussian curvature in terms of the Enneper-Weierstrass representation. The proofs of Theorem~\ref{seconda} and Theorem~\ref{prejprej} form the core of the paper. They are involved and contain a number of subtle relationships. The proof of Theorem~\ref{prejprej} is given in Section~\ref{mainma}, while the proof of Theorem~\ref{seconda} is presented in Section~\ref{sec3}.
We prepare for these proofs in Section~\ref{sec2} by describing certain Scherk-type surfaces over the unit disk. Once Theorem~\ref{prejprej} and Theorem~\ref{seconda} have been established, the proof of Theorem~\ref{mainres} follows easily.

\begin{proof}[Proof of Theorem~\ref{mainres}]
Assume that $S=\{(u,v,\mathbf{f}(u,v))\}$ is an arbitrary minimal surface above the disk $D(w_0, R)$, and assume that $\xi$ is the point above $w_0$. Assume, without loss of generality, that $R=1, w_0=0$ and $\mathbf{f}(0,0)=0.$ For $\mathbf{f}^\tau(w) = \mathbf{f}(e^{\imath\tau}w)$, we have
$$\mathbf{f}^\tau_u(0,0)=\cos \tau \,\mathbf{f}_u(0,0)+\sin \tau \,\mathbf{f}_v(0,0).$$ Furthermore
$$\mathbf{f}^\tau_{uv}(0,0)=\cos(2 \tau) \mathbf{f}_{uv}(0,0)+\cos (\tau) \sin(\tau)  \left(-\mathbf{f}_{vv}(0,0)+\mathbf{f}_{uu}(0,0)\right).$$
Since $\mathbf{f}^{\pi/2}_{uv}(0,0)=-\mathbf{f}_{uv}(0,0)$, there is $\tau$ so that $\mathbf{f}^\tau_{uv}(0,0)=0$. In other words, the new minimal surface
$S^\tau:\zeta(u,v)=(u,v,\mathbf{f}^\tau(u,v))$ satisfies the condition $\mathbf{f}^\tau_{uv}(0,0)=0$ of Theorem~\ref{prejprej}.
 Let $\mathbf{n}$ be its unit normal at $\xi=(0,0,\mathbf{f}^\tau(0,0))=(0,0,0)$. Then, from Theorem~\ref{prejprej}, there is a Scherk type surface $S^\diamond: \zeta(u,v)=(u,v,\mathbf{f}^\diamond(u,v))$, above a quadrilateral $Q(a_0,a_1,a_2,a_3)$  having the  unit normal equal to $\mathbf{n}$ at $\xi=(0,0,0)$. Moreover $$|\mathcal{K}_{S^\tau}(\mathbf{\xi})|<|\mathcal{K}_{S^\diamond}(\mathbf{\xi})|.$$
 On the other hand, by Theorem~\ref{seconda}, $|\mathcal{K}_{S^\diamond}(\mathbf{\xi})|\le \pi^2/2$ and this finishes the proof.
\end{proof}

\subsection{Minimal surfaces and Gaussian curvature}
Let $S\subset \R^3=\C\times \R$ be a minimal graph lying over the unit disc $\D\subset \C$.
Let $\varpi=(u,v,T):\D\to S$ be a conformal harmonic parameterization of $S$ with $\varpi(0)=0$.
Its projection $(u,v):\D\to \D$ is a harmonic diffeomorphism of the disc which may be assumed
to preserve orientation. Let $z$ be a complex variable in $\D$, and write
$u+\imath v = f$ in the complex notation.
We denote by $f_z=\di f/\di z$ and $f_{\bar z}=\di f/\di \bar z$ the Wirtinger derivatives of $f$.
The function $\omega$ defined by
\begin{equation}\label{eq:omega}
\overline{f_{\bar z}} = \omega f_z
\end{equation}
is called the {\em second Beltrami coefficient} of $f$, and the above equation is the
{\em second Beltrami equation} with $f$ as a solution. Observe that $\bar{f}_z=\overline{f_{\bar z}}$ and this notation will be used in the sequel.

Orientability of $f$ is equivalent to $\mathrm{Jac}(f)=|f_z|^2-|f_{\bar z}|^2>0$, hence to
$|\omega|<1$ on $\D$. Furthermore, the function $\omega$ is holomorphic whenever
$f$ is harmonic and orientation preserving. (In general, it is meromorphic when $f$ is harmonic.)
To see this, let
\begin{equation}\label{eq:fhg}
u+\imath v = f = h+\overline g
\end{equation}
be the canonical decomposition of the harmonic map $f:\D\to\D$,
where $h$ and $g$ are holomorphic functions on the disk. Then,
\begin{equation}\label{eq:omega2}
f_z=h',\quad\ f_{\bar z}=\overline g_{\bar z}= \overline{g'}, \quad\
\omega =  \overline{f_{\bar z}}/f_z = g'/h'.
\end{equation}
In particular, the second Beltrami coefficient $\omega$ equals the meromorphic function $g'/h'$
on $\D$. In our case we have $|\omega|<1$, so it is a holomorphic map $\omega:\D\to\D$.

We now consider the Enneper--Weierstrass representation of the minimal graph
$\varpi=(u,v,T):\D \to S\subset \D\times \R$ over $f$, following Duren \cite[p.\ 183]{DUREN}. We have
\begin{eqnarray*}
	u(z) &=& \Re f(z) = \Re \int_0^z \phi_1(\zeta)d\zeta \\
	v(z) &=& \Im f(z) = \Re \int_0^z \phi_2(\zeta)d\zeta \\
	T(z) &=& \Re \int_0^z \phi_3(\zeta)d\zeta
\end{eqnarray*}
where
\begin{eqnarray*}
	\phi_1 &=& 2(u)_z = 2(\Re f)_z = (h+\bar g + \bar h + g)_z = h'+g', \\
	\phi_2 &=& 2(v)_z = 2(\Im f)_z = \imath(\bar h+g - h -\bar g)_z = \imath(g'-h'), \\
	\phi_3 &=& 2(T)_z = \sqrt{-\phi_1^2-\phi_2^2} = \pm 2\imath \sqrt{h'g'}.
\end{eqnarray*}
The last equation follows from the identity $\phi_1^2+\phi_2^2+\phi_3^2=0$ which
is satisfied by the Enneper--Weierstrass datum $\phi=(\phi_1,\phi_2,\phi_3)=2\di \varpi.$ Let us denote $\mathbf{p}=f_z$. We have
\begin{equation}\label{eq:p}
\mathbf{p} = f_z = (\Re f)_z + \imath (\Im f)_z = \frac12(h'+g' + h'-g') = h'.
\end{equation}
By using $\omega =  \overline{f_{\bar z}}/f_z = g'/h'$ (see \eqref{eq:omega2}), it follows that
\[
\phi_1 = h'+g'=\mathbf{p}(1+\omega),\quad \phi_2 = -\imath(h'-g')=-\imath \mathbf{p}(1-\omega),\quad
\phi_3 = \pm 2\imath \mathbf{p} \sqrt{\omega}.
\]
From the formula for $\phi_3$ we infer that $\omega$ has a well-defined holomorphic square root:
\begin{equation}\label{eq:q}
\omega = \mathbf{q}^2,\qquad \mathbf{q}:\D\to \D\ \ \text{holomorphic}.
\end{equation}
In terms of the Enneper--Weierstrass parameters $(\mathbf{p},\mathbf{q})$ given by \eqref{eq:p} and \eqref{eq:q}, we obtain
\begin{equation}\label{eq:EW}
\phi_1 = \mathbf{p}(1+\mathbf{q}^2),\quad \phi_2 = -\imath \mathbf{p}(1-\mathbf{q}^2),\quad
\phi_3 = -2\imath \mathbf{p} \mathbf{q}.
\end{equation}
(The choice of sign in $\phi_3$ is a matter of convenience; since we have two choices of sign for
$\mathbf{q}$ in \eqref{eq:q}, this does not cause any loss of generality. Hence,
\begin{equation}\label{param}
\varpi(z) = \left(\Re f(z), \Im f(z), \Im \int_0^z 2 \mathbf{p}(\zeta) \mathbf{q}(\zeta) d\zeta \right),\quad z\in\D.
\end{equation}

The curvature $\mathcal{K}$ of the minimal graph $S$ is expressed in terms of $(h,g,\omega)$ \eqref{eq:omega2}, and in terms
of the Enneper--Weierstrass parameters $(\mathbf{p},\mathbf{q})$, by
\begin{equation}\label{eq:curvatureformula}
\mathcal{K} = - \frac{|\omega'|^2}{|h'g'|(1 + |\omega|)^4} = - \frac{4|\mathbf{q}'|^2}{|\mathbf{p}|^2(1 + |\mathbf{q}|^2)^4},
\end{equation}
where $\mathbf{p}=f_z$ and $\omega=\mathbf{q}^2=\overline{f_{\bar z}}/f_z$. (See Duren \cite[p.\ 184]{DUREN}.) For a slightly different formula concerning the expression of Gaussian curvature we refer to the monographs by Alarcon, Forstneri\v c and Lopez \cite[Sec.~2.6]{ALARCONFORSTNERICLOPEZ} and also the paper of P\'erez and Ros in the monograph \cite{ROS}.
\section{Proof of Theorem~\ref{prejprej}}\label{mainma}
\begin{proof}[Proof of Theorem~\ref{prejprej}]
In order to prove Theorem~\ref{prejprej}, we will derive a useful formula for $\mathbf{f}_{uv}$, of a non-parametric minimal surface $T=\mathbf{f}(u,v)$. Namely we will express $\mathbf{f}_{uv}$ as a function of Enneper-Weisstrass parameters.
Assume that $\mathbf{q}(z) = a(z) + \imath b(z)=\sqrt{\omega(z)}$ and $\mathbf{p}$ are Enneper-Weisstrass parameters of a minimal disk $S=\{(u(z), v(z), T(z)), z\in \D\}=\{(u,v,\mathbf{f}(u,v)): (u,v)\in \D\}$ over the unit disk. Here$, z=x+\imath y, f=u+\imath v$ and $\bar f_z=\omega(z) f_z$.

From Enneper--Weierstrass parameterization, we have that $\frac{\partial \varpi}{\partial x}=(\frac{\partial u}{\partial x}, \frac{\partial v}{\partial x}, \frac{\partial T}{\partial x})=\Re(\phi_1, \phi_2, \phi_3)$ and $\frac{\partial \varpi}{\partial y}=(\frac{\partial u}{\partial y}, \frac{\partial v}{\partial y}, \frac{\partial T}{\partial y})=-\Im( \phi_1, \phi_2, \phi_3)$ and the cross product is $\Im(\phi_2\overline{\phi_3},\phi_3\overline{\phi_1}, \phi_1\overline{\phi_2})=-|\mathbf{p}|^2(1+|\mathbf{q}|^2)(2\Im \mathbf{q},2\Re \mathbf{q}, -1+|\mathbf{q}|^2).$ Then, in view of \cite[p.~169]{DUREN}, the unit normal at $\mathbf{\xi}\in S$ is given by $$\mathbf{n}_{\mathbf{\xi}}=-\frac{1}{1+|\mathbf{q}(z)|^2}(2\Im \mathbf{q}(z), 2\Re \mathbf{q}(z), -1+|\mathbf{q}(z)|^2).$$ Note that $\mathbf{q}$ is the Gauss map of the minimal surface, after the stereographic projection. The unit normal is also given by the formula
$$\mathbf{n}_{\mathbf{\xi}}=\frac{1}{\sqrt{1+\mathbf{f}_u^2+\mathbf{f}_v^2}}\left(-\mathbf{f}_u,-\mathbf{f}_v,1\right).$$  Then we have the relations
 \begin{equation}\label{firsti}\mathbf{f}_v (u(x,y),v(x,y))=\frac{2 a(x,y)}{-1+a(x,y)^2+b(x,y)^2},
  \end{equation}
  \begin{equation}\label{secondi}\mathbf{f}_u(u(x,y),v(x,y))=\frac{2 b(x,y)}{-1+a(x,y)^2+b(x,y)^2}.\end{equation}
By differentiating \eqref{firsti} and \eqref{secondi}  w.r.t. $x$  we obtain the equations
 \begin{equation}\label{firsti1}v_x \mathbf{f}_{uv}(u,v)+u_x \mathbf{f}_{uu}(u,v)=-\frac{4 a b a_x+2 \left(1-a^2+b^2\right) b_x}{\left(-1+a^2+b^2\right)^2},\end{equation}
  \begin{equation}\label{secondi1}v_x\mathbf{f}_{vv}(u,v) +u_x \mathbf{f}_{uv}(u,v)=-\frac{4 a b b_x+2 \left(1-a^2+b^2\right) a_x}{\left(-1+a^2+b^2\right)^2}.\end{equation}
Now recall the minimal surface equation
 \begin{equation}\label{mse}\left(1+\mathbf{f}^2_u(u,v)^2\right)\mathbf{f}_{vv}(u,v)+\left(1+\mathbf{f}^2_v(u,v)^2\right) \mathbf{f}_{uu}(u,v)=2 \mathbf{f}_v(u,v) \mathbf{f}_u(u,v)  \mathbf{f}_{uv}(u,v). \end{equation}
 From \eqref{firsti}, \eqref{secondi}, \eqref{firsti1},\eqref{secondi1} and \eqref{mse} we get
$$\mathbf{f}_{uv}=\frac{M}{N}$$ where
\[\begin{split}M&=-2 (a^4+2 a^2 (-1+b^2)+(1+b^2)^2) ((1+a^2-b^2) a_x+2 a b b_x) u_x\\&-2((1+a^2)^2+2 (-1+a^2) b^2+b^4) (2 a b a_x+(1-a^2+b^2) b_x) v_x\end{split}\] and
\[\begin{split}N&=(1-a^2-b^2)^2 \\&\times ((a^4-2 a^2 (1-b^2)+(1+b^2)^2) u_x^2+8 a b u_x v_x+((1+a^2)^2-2 (1-a^2) b^2+b^4) v_x^2).\end{split}\]
Let $\mathbf{q}(z)=a+\imath b=r e^{it}$, $\mathbf{q}'(z) = a_x+\imath b_x=R e^{\imath s}$ and $\mathbf{p}=Pe^{\imath m}$.  Because $u_x=\Re (\mathbf{p}(1+\mathbf{q}^2))$, and $v_x=-\Re (\imath \mathbf{p}(1-\mathbf{q}^2))$,
after straightforward calculation we get
$$\mathbf{f}_{uv}=-\frac{2 R \left(\cos[m-s]-r^4 \cos[m-s+4 t]\right)}{P \left(1-r^2\right)^3 \left(1+r^2\right)},$$
which can be written as
\begin{equation}\label{fexpli}\mathbf{f}_{uv}=-\frac{2\Re \left[\mathbf{p}(1-\mathbf{q}^4)\overline{\mathbf{q}'}\right]}{|\mathbf{p}|^2(1-|\mathbf{q}|^2)^3(1+|\mathbf{q}|^2)}.\end{equation}

Now we continue to prove Theorem~\ref{prejprej}.
The solution of \eqref{beleq} with such initial conditions exists and is unique \cite[Theorem~A\&~Theorem~1]{BSHOUTYLYZZAIKWEITSMAN} and maps the unit disk onto a quadrilateral $Q(a_0,a_1,a_2,a_3)$ whose vertices $a_0,a_1,a_2,a_3$, $a_4=a_0$ belongs to the unit circle. Moreover by  \cite[Theorem~B]{BSHOUTYLYZZAIKWEITSMAN}, there are four points $b_k=e^{\imath\beta_k}, \ k=0,1,2,3$, $b_4=b_0, b_5=b_1$, $$F(e^{\imath t})=\sum_{k=1}^4 a_k I_{(\beta_k, \beta_{k+1})}(t).$$ Here $F$ is the boundary function of $f$ and $I_{(\beta_k, \beta_{k+1})}(t)$ function defined on $\mathbb{T}$ as:
$$I_{(\beta_k, \beta_{k+1})}(t)=\begin{cases}
1, \quad\beta_k<t<\beta_{k+1}\\
0,\quad t<\beta_k \quad\text{or}\quad t>\beta_{k+1}.
\end{cases}$$ Therefore (see \cite[p.~63]{DUREN}) we can conclude that $$f_z(z) = \sum_{k=1}^4 \frac{d_k}{z-b_k}$$  and that  $$\bar{f}_z(z) = -\sum_{k=1}^4 \frac{\overline{d_k}}{z-b_k},$$ where
$$d_k = \frac{a_k -a_{k+1}}{2\pi \imath}.$$
Hence  the third coordinate of conformal parameterization is
 $$T(z) =\pm 2\Re \imath \int_0^z \sqrt{f_z\bar f_z}dz.$$ Thus when $z$ is close to $b_k$, then \begin{equation}\label{tinf}T(z) =\pm|d_k|^2\log|1-z/b_k|+O(z-b_k).\end{equation}
This implies that  $z\to b_k$, $T(z)\to \pm \infty$. This implies that $\mathbf{f}(z)\to \pm\infty$ if $z\to a\in(a_k, a_{k+1})$.
Since $$\mathbf{q}(z) =\frac{w+\frac{\imath \left(1-w^4\right) z}{\left|1-w^4\right|}}{1+\frac{\imath\overline{w} \left(1-w^4\right) z}{\left|1-w^4\right|}},$$ we get $$\mathbf{q}(0) = w \ \ \text{and}\ \ \mathbf{q}'(0)=\frac{\imath \left(1-w^4\right) \left(1-|w|^2\right) }{\left|1-w^4\right|}.$$

Further
$$\mathbf{p}(0)(1-\mathbf{q}(0)^4)\overline{\mathbf{q}'(0)}=-\imath f_z(0)|1-w^4|(1-|w|^2).$$
So in view of the formula \eqref{fexpli} we conclude $\mathbf{f}^\diamond_{uv}=0.$

Now we assert that \begin{equation}\label{needed}|\mathcal{K}_{S}(\mathbf{\xi})|< |\mathcal{K}_{S^{\diamond}}(\mathbf{\xi})|.\end{equation}
  Assume the converse $|\mathcal{K}_{S}(\mathbf{\xi})|\ge  |\mathcal{K}_{S^{\diamond}}(\mathbf{\xi})|$ and argue by a contradiction.  Then as in \cite{FINNOSSERMAN}, by using the dilatation $L(\zeta) = \lambda \zeta$ for some $\lambda\ge 1$ we get the surface
$$S_1=L(S)=\{(u,v,\lambda \mathbf{f}\left(\frac{u}{\lambda}, \frac{v}{\lambda}\right): |u+\imath v|<{\lambda}\},$$ whose Gaussian curvature
$$\mathcal{K}_1(\mathbf{\xi})=\frac{\frac{1}{\lambda^2} \left(\mathbf{f}_{uu}(0,0)\mathbf{f}_{vv}(0,0)-\mathbf{f}_{uv}(0,0)^2\right)}{(1+\mathbf{f}_u(0,0)^2+\mathbf{f}_v(0,0)^2)^2}.$$ Observe that such transformation does not change the unit normal at $\mathbf{\xi}$.

Then there is $\lambda_\ast\ge 1$ so that $\mathcal{K}_1(\mathbf{\xi})=\mathcal{K}_{S^\diamond}(\mathbf{\xi})$. Let $$\mathbf{f}^\ast (u,v) =\lambda_\ast \mathbf{f}\left(\frac{u}{\lambda_\ast}, \frac{v}{\lambda_\ast}\right).$$
From
$\mathbf{n}_\diamond=\mathbf{n}_\ast,$ we get
\begin{equation}\label{nowafter}\mathbf{f}^\diamond_{u}(0,0)=\mathbf{f}^\ast_{u}(0,0),  \ \mathbf{f}^\diamond_{v}(0,0)=\mathbf{f}^\ast_{v}(0,0).\  \end{equation}

  Further we have $$(1+(\mathbf{f}^\ast_{u}(0,0))^2)\mathbf{f}^\ast_{vv} (0,0)-2 \mathbf{f}^\ast_{u}(0,0)\mathbf{f}^\ast_{v}(0,0)\mathbf{f}^\ast_{uv} (0,0)+(1+(\mathbf{f}^\ast_{v}(0,0))^2)\mathbf{f}^\ast_{uu} (0,0)=0,$$
$$(1+(\mathbf{f}^\diamond_{u}(0,0))^2)\mathbf{f}^\diamond_{vv} (0,0)-2 \mathbf{f}^\diamond_{u}(0,0)\mathbf{f}^\diamond_{v}(0,0)\mathbf{f}^\diamond_{uv} (0,0)+(1+(\mathbf{f}^\diamond_{v}(0,0))^2)\mathbf{f}^\diamond_{uu} (0,0)=0,$$  $$ \mathbf{f}^\diamond_{uv}(0,0)=\mathbf{f}^\ast_{uv}(0,0)$$ and the equation
$$\frac{ \left(\mathbf{f}^\ast _{uu}(0,0)\mathbf{f}^\ast_{vv}(0,0)-\mathbf{f}^\ast_{uv}(0,0)^2\right)}{(1+\mathbf{f}^\ast_u(0,0)^2+\mathbf{f}^\ast_v(0,0)^2)^2}=
\frac{ \left(\mathbf{f}^\diamond _{uu}(0,0)\mathbf{f}^\diamond_{vv}(0,0)-\mathbf{f}^\diamond_{uv}(0,0)^2\right)}{(1+\mathbf{f}^\diamond_u(0,0)^2+\mathbf{f}^\diamond_v(0,0)^2)^2}.$$
We can also w.l.g. assume that $\mathbf{f}^\ast_{uu} $ and $\mathbf{f}^\diamond_{uu} $ as well as $\mathbf{f}^\ast_{vv} $ and $\mathbf{f}^\diamond_{vv} $ have the same sign. If not, then we choose $\lambda_\ast\le -1$ and repeat the previous procedure with $$S_1=L(S)=\{(u,v,\lambda \mathbf{f}\left(\frac{u}{\lambda}, \frac{v}{\lambda}\right): |u+\imath v|<{|\lambda|}\}.$$
Thus the function $F(u,v) = \mathbf{f}^\ast(u,v)- \mathbf{f}^\diamond(u,v)$ has all derivatives up to the order $2$ equal to zero in the point $w=0$.

To continue the proof we use the following lemma
\begin{lemma}\label{leci}
Assume that the quadrilateral $Q=Q(a,b,c,d)$ is inscribed in the unit disk, and assume that $\zeta=\mathbf{f}(u,v)$ is a Scherk type minimal surface $S$ above $Q$. i.e. assume that $\mathbf{f}(u,v)\to +\infty$ when $\zeta=u+\imath v \to w\in  (a,b)\cup (c,d)$ and $\mathbf{f}(u,v)\to -\infty$ when $\zeta=u+\imath v \to w\in  (b,c)\cup (a,d)$. Then there is not any other bounded minimal graph $\zeta=\mathbf{f}_1(u,v)$ over a simply connected domain $\Omega$ that contains  $Q$ which has the same Gaussian curvature, the same unit normal, and the same mixed derivative  at the same point $\mathbf{\xi}\in Q$ as the given surface $S$.
\end{lemma}
\begin{proof}[Proof of Lemma~\ref{leci}]
We observe that \cite[Proof of Proposition~1]{FINNOSSERMAN} works for every Scherk type minimal surface, so if we would have a  bounded minimal surface having the all derivatives up to the order 2 equal to zero, then such non-parametric parameterizations $\mathbf{f}$ and $\mathbf{f}_1$, in view of \cite[Lemma~1]{FINNOSSERMAN} will satisfy the relation $F(z)=\mathbf{f}(z)-\mathbf{f}_1(z)=O(\zeta^N(z))$, $N\ge 3$, where $\zeta$ is a certain homeomorphism between two open sets containing $0$. Then by following the proof of \cite[Proof of Proposition~1]{FINNOSSERMAN} (second part) we get that this is not possible, because Scherk type surface has four "sides" but the number $2N$ is bigger or equal to $6$ which is not possible.
\end{proof}

This leads to the contradiction so \eqref{needed} is true. This finishes the proof of Theorem~\ref{prejprej}.
\end{proof}

\section{Two parameter family of Scherk type minimal surfaces}\label{sec2}

 In this section, we provide a complete description of Scherk-type minimal surfaces defined over bicentric quadrilaterals inscribed in the unit disk.
According to results of Jenkins and Serrin~\cite{JENKINSSERRIN}, as well as Sheil-Small~\cite{SHEILSMALL}, the image of the corresponding harmonic function
$f$ is a bicentric quadrilateral. Following this construction, we explicitly compute certain quantities needed for the evaluation of the Gaussian curvature.

Harmonic mappings with prescribed dilatation (especially when the Beltrami coefficient is a Blaschke product, as in our case) have been studied extensively. Several results concerning the existence and uniqueness of such mappings, as well as more general classes, can be found in~\cite{BSHOUTYLYZZAIKWEITSMAN},\cite{HENGARTNERSCHOBER},\cite{JENKINSSERRIN}, \cite{JENKINSSERRIN1}, and~\cite{SHEILSMALL}.
We begin with this proposition.
\begin{proposition}\label{propduke}\cite{JENKINSSERRIN}
Let \( Q \subset \mathbb{R}^2 \) be a convex quadrilateral with boundary edges labeled cyclically as \( A_1, B_1, A_2, B_2 \). Suppose the boundary data for the minimal surface Dirichlet problem is prescribed by
\begin{equation}\label{conduke}
T = +\infty \quad \text{on } A_1 \cup A_2, \qquad T = -\infty \quad \text{on } B_1 \cup B_2.
\end{equation}
Then the Dirichlet problem for the minimal surface equation over \( Q \) admits a solution if and only if the following identity is satisfied:
\[
|A_1| + |A_2| = |B_1| + |B_2|,
\]
where \( |\cdot| \) denotes the Euclidean length of each edge.

Moreover, when a solution exists, it is unique up to an additive constant; that is, any two solutions differ by a vertical translation.
\end{proposition}

\medskip

Now let \( Q \subset \mathbb{R}^2 \) be a convex quadrilateral with sides labeled cyclically as \( A_1, B_1, A_2, B_2 \), and suppose that \( Q \) is inscribed in the unit disk \( \mathbb{D} \subset \mathbb{C} \). We construct in Subsection~\ref{subse28} a harmonic diffeomorphism \( f : \mathbb{D} \to Q \) that solves the second Beltrami equation:
\[
\overline{f_{\bar{z}}} = \nu(z)\, f_z, \qquad \nu(z) =  \left(e^{i t} \frac{z - a}{1 - z \bar{a}} \right)^2,
\]
for some \( a \in \mathbb{D} \) and angle \( t \in \mathbb{R} \).

This mapping \( f \) provides, through \eqref{param}, a conformal parameterization of the minimal graph
\[
S = \left\{ (u, v, T_1(u, v)) : (u, v) \in Q \right\},
\]
which satisfies the boundary condition~\eqref{conduke} (this follows from the asymptotic behavior of the parametrization; see \eqref{tinf}).

Since both \( T \) and \( T_1 \) solve the same Dirichlet problem over \( Q \) with identical infinite boundary conditions, the uniqueness result from Proposition~\ref{propduke} implies that
\[
T_1 = T + c
\]
for some constant \( c \in \mathbb{R} \). Thus, every such minimal graph over \( Q \) arises from this construction, up to a vertical translation.

\subsection{Construction of the two-parameter family}\label{subse28}
Note that if $f$ satisfies \eqref{beleq} then
$$f^{\star}(z):=f\bigg(-\frac{e^{-\imath\sigma}(w-z)}{1-\overline{w}z}\bigg),$$
where $e^{\imath\sigma}=\frac{\imath(1-w^4)}{|1-w^4|},$ satisfies
$$\overline{f^{\star}}_{ \bar{z}}=z^2 f^{\star}_{ z},$$
with $f^{\star}(w)=0,\quad  \imath(1-w^4)f^{\star}_{z}(w)>0.$ We will, therefore, consider the following Beltrami equation:
\begin{equation}
\label{eq:belz^2}
\overline{f}_{ \bar{z}}=z^2 f_{ z}.
\end{equation}
 Its solution is well understood and already described in papers \cite{JENKINSSERRIN},\cite{BSHOUTYLYZZAIKWEITSMAN} and~\cite{SHEILSMALL}. We will not impose these additional conditions at this stage, as omitting them allows for a slightly broader family of Scherk-type surfaces. These conditions can be easily incorporated at the end of the construction to recover exactly the family described in \ref{prejprej}.
 If $f_1$ is harmonic mapping and it satisfies the equation \eqref{eq:belz^2}, by \cite{BSHOUTYLYZZAIKWEITSMAN} and \cite{SHEILSMALL}, it is a mapping of the unit disk onto a quadrilateral inscribed in the unit disk, which is a Poisson extension of a step function, determined by a set of four points on the unit circle, that defines a quadrilateral in the domain. In \cite{SHEILSMALL} it is proved that the sums of lengths of two non-adjacent sides of quadrilateral in co-domain are equal, i.e. such quadrilateral is bicentric. Moreover, in Example~4 from the same paper the relation between the vertices of quadrilaterals in domain and co-domain is explicitly given. More precisely, if $b_0=1, b_1=e^{\imath \beta_1}, b_2=e^{\imath \beta_2}, b_3=e^{\imath \beta_3}$ and $f_1$ is defined as the harmonic extension of the function

  $$F_1(\psi)=\left\{
 \begin{array}{ll}
    a_1=e^{\imath \alpha_1}, & \hbox{if $\psi\in[0,\beta_1)$;} \\
 a_2=e^{\imath \alpha_2}, & \hbox{if $\psi\in[\beta_1,\beta_2)$;} \\
  a_3=e^{\imath \alpha_3}, & \hbox{if $\psi\in[\beta_2,\beta_3)$;} \\
  a_4=e^{\imath \alpha_4}, & \hbox{if $\psi\in[\beta_3,2\pi )$,}
  \end{array}
  \right.
  $$
 by the mentioned example from \cite{SHEILSMALL}, we have:
$$ b_k=\epsilon_k \frac{|a_{k+1}-a_k|}{a_{k+1}-a_k}$$
for $k=1,2,3,4,$ $\epsilon_1=\epsilon_3=1, \epsilon_2=\epsilon_4=-1,$ $\alpha_5=\alpha_1+2\pi.$ From
$$a_{k+1}-a_k=e^{\imath \alpha_{k+1}}-e^{\imath \alpha_k}=2\imath\sin\frac{\alpha_{k+1}-\alpha_k}{2} e^{\imath \frac{\alpha_{k+1}+\alpha_k}{2}},$$
we conclude $b_0b_2+b_1b_3=0$ and
\begin{equation}
  \label{eq:temena}
  b_k=e^{\imath \big(\frac{3\pi}{2}-\frac{\alpha_{k+1}+\alpha_k}{2}\big)}.
  \end{equation}
Hence, if we take $\beta_1=p,$ $\beta_2=q,$ then $\beta_3=\pi+q-p$,
\begin{equation}\label{eq:condition1}0<p< q<q-p+\pi<2\pi.\end{equation} Also, from the equation \eqref{eq:temena} we can express $\alpha_k'$s in terms of $p, q$ and  $\beta $ as $\alpha_1=\frac{q+\pi}{2}+\beta-p,$ $\alpha_2=\frac{5\pi-q}{2}-p-\beta,$ $\alpha_3=\frac{5\pi-3q}{2}+\beta+p$ and $\alpha_4=\frac{5\pi-q}{2}+p-\beta$.

  After the rotation $\mathcal{R}$ of the quadrilateral in co-domain for the angle $\beta+\frac{q-\pi}{2},$ we get the quadrilateral with vertices $e^{\imath x}, e^{\imath y}, e^{\imath s}, e^{\imath p},$ where $x=q-p+2\beta, y=2\pi-p, s=2\pi-q+p+2\beta,$ and $\beta$ is a certain parameter. We will show that $\beta$ depends on $p$ and $q.$ In order to do this, we start from the identity:
$$|e^{\imath p}- e^{\imath x}|+|e^{\imath y}-e^{\imath s}|=|e^{\imath x}- e^{\imath y}|+|e^{\imath s}-e^{\imath p}|,$$
or, equivalently:
$$\bigg|\sin\frac{p-x}{2}\bigg|+\bigg|\sin\frac{y-s}{2}\bigg|=\bigg|\sin\frac{x-y}{2}\bigg|+\bigg|\sin\frac{s-p}{2}\bigg|,$$
since the quadrilateral with vertices $e^{\imath x}, e^{\imath y}, e^{\imath s}, e^{\imath p},$ is also bicentric.
Inserting  $x=q-p+2\beta, y=2\pi-p, s=2\pi-q+p+2\beta,$ we get
$$|\sin(\beta+\frac{q}{2}-p)|+|\sin(\beta+p-\frac{q}{2})|=|\sin(\frac{q}{2}+\beta-\pi)|+|\sin(\pi-\frac{q}{2}+\beta)|.$$
To transform this equality in more conventional form, we recall the ordering between vertices. In fact, the following chain of inequalities holds:
\begin{equation}
\label{eq:poredak}
p< q-p+2\beta<2\pi-p<2\pi-q+p+2\beta<2\pi+p,
\end{equation}
which implies the following conditions: $\beta+p >\frac{q}{2}, \beta+\frac{q}{2}> p,$ $\frac{q}{2}+\beta<\pi$ and $\frac{q}{2}>\beta.$
Further, we infer $\beta> 0$ and $\beta+\frac{q}{2}-p \in(0,\pi),$ $\beta+\frac{q}{2}\in (0,\pi)$ and $0<\beta+p-\frac{q}{2}=\beta+\frac{q}{2}+p-q<\beta+\frac{q}{2}<\pi.$

Hence $$|\sin(\beta+\frac{q}{2}-p)|=\sin(\beta+\frac{q}{2}-p),\ \  |\sin(\frac{q}{2}+\beta-\pi)|=\sin(\frac{q}{2}+\beta),$$
$$|\sin(\beta+p-\frac{q}{2})|=\sin(\beta+p-\frac{q}{2})\text{ and } \ \ |\sin(\pi-\frac{q}{2}+\beta)|=\sin(\pi-\frac{q}{2}+\beta),$$ therefore, we have $$\sin(\beta+\frac{q}{2}-p)+\sin(\beta+p-\frac{q}{2})=\sin(\beta+\frac{q}{2})+\sin(\frac{q}{2}-\beta)$$ i.e.  $$\sin\beta\cos(\frac{q}{2}-p)=\cos\beta\sin\frac{q}{2},$$ which finally implies
\begin{equation}
\label{eq:tanbeta}
\tan\beta=\frac{\sin\frac{q}{2}}{\cos(\frac{q}{2}-p)}.
\end{equation}
Note that $\tilde{f}:=\Rcal(f_1(z))$ has the dilatation \begin{equation}\label{dila}\tilde\omega(z)=\frac{\tilde g'}{\tilde h'}=-e^{-\imath(2\beta+q)}z^2.\end{equation}
We are now searching for M\"{o}bius transformation $M$ which maps $(z_1,z_2,z_3,z_4)=(1, e^{\imath \alpha}, -1, -e^{\imath \alpha})$ to
$(w_1,w_2,w_3,w_4)=(1, e^{\imath p}, e^{\imath q}, -e^{\imath(q-p)}).$ By  using the formula
$$\frac{(z_1-z_3)(z_2-z)}{(z_2-z_3)(z_1-z)}=\frac{(w_1-w_3)(w_2-w)}{(w_2-w_3)(w_1-w)},$$
where $w_j=M(z_j), j=1,2,3,$ and  $ w=M(z),$ we get:
$$\frac{2(e^{\imath\alpha}-z)}{(e^{\imath \alpha}+1)(1-z)}=\frac{(1-e^{\imath q})(e^{ip}-M(z))}{(e^{\imath p}-e^{\imath q})(1-M(z))},$$
i.e.:
\begin{equation}
\label{eq:mebijus}
 M(z)= 1+\frac{z-1}{\frac{2(z-e^{\imath\alpha})}{(e^{\imath\alpha}+1)(e^{\imath q}-1)}+\frac{(e^{\imath\alpha}-1)(z+1)}{(e^{\imath\alpha}+1)(e^{\imath p}-1)}}.
\end{equation}
It remains to choose a specific $\alpha$  so that $M(-e^{\imath \alpha})=-e^{\imath(q-p)}$. Using equation \eqref{eq:mebijus}, with $z=-e^{\imath \alpha}$ and $M(z)=-e^{\imath(q-p)}$, we get
$$ \frac{4e^{\imath \alpha}}{(e^{\imath\alpha}+1)^2}=\frac{(1-e^{\imath q})(e^{\imath p}+e^{\imath(q-p)})}{(e^{\imath p}-e^{\imath q})(1+e^{\imath(q-p)})},$$
i.e. after inverting both sides and reducing to sine and cosine functions
$$   \cos^2\frac{\alpha}{2}=\frac{\sin(q-p)}{2\sin(\frac{q}{2})\cdot\cos(\frac{q}{2}-p)}.$$
The last identity leads to the following identity
\begin{equation}
\label{eq:kosalpha}
\begin{split}
\cos\alpha&=\frac{\sin(q-p)-\sin(\frac{q}{2})\cos(\frac{q}{2}-p)}{\sin(\frac{q}{2})\cos(\frac{q}{2}-p)}
\\&=\frac{\sin(q-p)-\sin p}{2\sin(\frac{q}{2})\cos(\frac{q}{2}-p)}\\
&=\frac{\tan(\frac{q}{2}-p)}{\tan(\frac{q}{2})}
\\&=\frac{\sin(q-p)-\sin p}{\sin(q-p)+\sin p}.
\end{split}
\end{equation}

Assume now that \begin{equation}\label{QQ} \mathcal{S}=\{(p,q): (p,q)\text{ satisfies } \eqref{eq:condition1},\eqref{eq:poredak}\text{ and }\eqref{eq:tanbeta} \text{ and }\ 0\in Q(e^{\imath p}, e^{\imath x}, e^{\imath y}, e^{\imath s})\}.
 \end{equation}
The condition \( 0 \in Q(e^{\imath p}, e^{\imath x}, e^{\imath y}, e^{\imath s}) \) is equivalent to requiring that the argument of the difference between any pair of adjacent vertices is less than \( \pi \). That is, for
\[
x = q - p + 2\beta, \quad y = 2\pi - p, \quad s = 2\pi - q + p + 2\beta,
\]
the following inequalities must hold:
\[
x - p < \pi, \quad y - x < \pi, \quad s - y < \pi, \quad p - (s - 2\pi) < \pi.
\]
The set $\mathcal{S}$ is non-empty open set (thus having positive Lebesgue measure). See Figure~\ref{25}.
\begin{figure}
\centering
\includegraphics{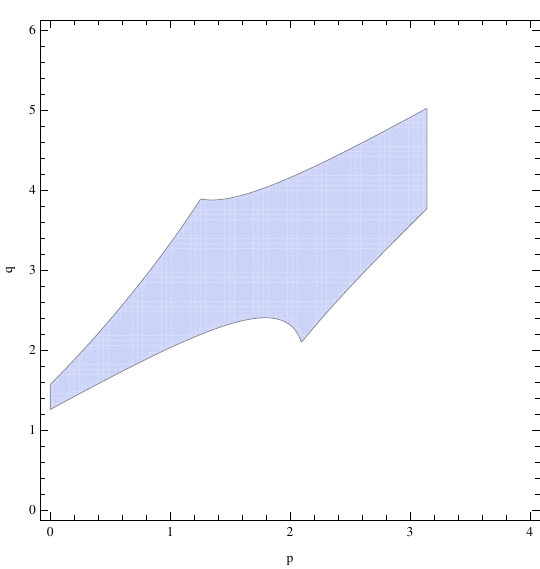}
\caption{The domain $\mathcal{S}$ is inside of the square $[0,2\pi]\times [0,2\pi]$.}
\label{25}
\end{figure}

From now on, we will assume that $(p,q)\in \mathcal{S}$ and consider the mapping $f(z):=\tilde{f}( M(z)).$ It maps the unit disk onto a bicentric quadrilateral whose vertices are $e^{\imath p},$ $e^{\imath x}$, $ e^{\imath y},$ and $ e^{\imath s}.$ Note that the points from the unit circle that are the limit points for the mapping $f$ are $1,e^{\imath \alpha},-1,-e^{\imath \alpha}$ and make a rectangle. More precisely, $$f(re^{it}) =\frac{1}{2\pi}\int_0^{2\pi}\frac{1-r^2}{1+r^2-2 r \cos(t-\psi)}F(e^{\imath\psi})d\psi$$ where $F$ is the step function defined by $$F(e^{\imath\psi})=\left\{
                     \begin{array}{ll}
                       e^{\imath x}, & \hbox{if $\psi\in[0,\alpha)$;} \\
                       e^{\imath y}, & \hbox{if $\psi\in[\alpha,\pi)$;} \\
                       e^{\imath s}, & \hbox{if $\psi\in[\pi,\pi+\alpha)$;} \\
                       e^{\imath p}, & \hbox{if $\psi\in[\pi+\alpha,2\pi)$.}
                     \end{array}
                   \right.
 $$
 From \cite{SHEILSMALL}, we again infer the existence of $a \in \mathbb{D},b\in \mathbb{C}, \theta \in [0,2\pi] $ such that
 \begin{equation}
 \label{eq:pbold}
 \mathbf{p}=\frac{b(1-z\overline{a})^2}{(z^2-1)(z^2-e^{2\imath\alpha})}
 \end{equation}
 and
 \begin{equation}
 \label{eq:qbold}
 \mathbf{q}=e^{\imath\theta}\frac{z-a}{1-z\overline{a}},
 \end{equation}
 where $\mathbf{p}=h'$ and $\mathbf{p}\mathbf{q}^2=g'.$ Note that $a$ is the zero of the M\"obius transformation $M$.

 In the next lemma we will give the explicit formula for mapping $f.$

 \begin{lemma}
 	The mapping $f$ given by the Poisson extension of the step function $F$ can be represented as:
 	\begin{equation}\label{fff}f(z)=u(z)+\imath v(z)+f(0),\end{equation}
 	with
 	$$f(0)=\frac{\alpha e^{2\imath\beta}\cos(p-q)+(\pi-\alpha)\cos p}{\pi},$$
 	\begin{equation}
 	\label{eq:u}
 	\begin{split}
 	u(re^{\imath t}) &=\frac{\cos (q-p+2\beta)-\cos p}{\pi}\tan^{-1}\frac{r\sin (\alpha-t)}{1-r\cos(\alpha-t)}
 	\\&+\frac{\cos p-\cos (p-q+2\beta)}{\pi}\tan^{-1}\frac{r\sin t}{1+r\cos t}
 	\\&+\frac{\cos p -\cos (p-q+2\beta)}{\pi}\tan^{-1}\frac{r\sin (\alpha-t)}{1+r\cos(\alpha-t)}
 	\\&+\frac{\cos (q-p+2\beta)-\cos p}{\pi}\tan^{-1}\frac{r\sin t}{1-r\cos t}\end{split}
 	\end{equation}
 	and
 	\begin{equation}
 	\label{eq:v}
 	\begin{split}
 	v(re^{\imath t}) &=\frac{\sin (q-p+2\beta)+\sin p}{\pi}\tan^{-1}\frac{r\sin (\alpha-t)}{1-r\cos(\alpha-t)}
 	\\&-\frac{\sin p+\sin (p-q+2\beta)}{\pi}\tan^{-1}\frac{r\sin t}{1+r\cos t}
 	\\&+\frac{\sin p-\sin (p-q+2\beta)}{\pi}\tan^{-1}\frac{r\sin (\alpha-t)}{1+r\cos(\alpha-t)}
 	\\&+\frac{\sin (q-p+2\beta-\sin p)}{\pi}\tan^{-1}\frac{r\sin t}{1-r\cos t}.\end{split}
 	\end{equation}
 \end{lemma}

 \begin{proof}Following \cite[Section 4.3,~ p.~63]{DUREN} and the definition of $f$ as the harmonic extension of the step function $F$, we find
 	
 	$$h'(z)=\frac{1}{2\pi \imath}\bigg(
 	\frac{e^{\imath x}-e^{\imath y}}{z-e^{\imath \alpha}}+\frac{e^{\imath y}-e^{\imath s}}{z+1}+\frac{e^{\imath s}-e^{\imath p}}{z+e^{\imath \alpha}}+\frac{e^{\imath p}-e^{\imath x}}{z-1}\bigg)$$
 	
 	and
 	
 	$$g'(z)=\frac{1}{2\pi \imath}\bigg(
 	\frac{e^{-\imath x}-e^{-\imath y}}{z-e^{\imath \alpha}}+\frac{e^{-\imath y}-e^{-\imath s}}{z+1}+\frac{e^{-\imath s}-e^{-\imath p}}{z+e^{\imath \alpha}}+\frac{e^{-\imath p}-e^{-\imath x}}{z-1}\bigg)$$
 	
 	and, therefore:
 	
 	\[\begin{split}\varphi_1(z)&=h'(z)+g'(z)\\&=\frac{1}{\pi \imath}\bigg(
 	\frac{\cos x-\cos y}{z-e^{\imath \alpha}}+\frac{\cos y-\cos s}{z+1}+\frac{\cos s- \cos p}{z+e^{\imath \alpha}}+\frac{\cos p-\cos x}{z-1}\bigg)
 	\\&=\frac{1}{\pi \imath}\bigg(
 	\frac{\cos (q-p+2\beta)-\cos p}{z-e^{\imath \alpha}}+\frac{\cos p-\cos (p-q+2\beta)}{z+1}
 	\\ & \ \ +\frac{\cos (p-q+2\beta)- \cos p}{z+e^{\imath \alpha}}+\frac{\cos p-\cos (q-p+2\beta)}{z-1}\bigg)                                                                    \end{split}
 	\]

 	and
 	
 	\[\begin{split}\varphi_2(z)&=-\imath \big(h'(z)-g'(z)\big)
 	\\&=\frac{1}{\pi \imath}\bigg(
 	\frac{\sin x-\sin y}{z-e^{\imath \alpha}}+\frac{\sin y-\sin s}{z+1}+\frac{\sin s- \sin p}{z+e^{\imath \alpha}}+\frac{\sin p-\sin x}{z-1}\bigg)
 	\\&=\frac{1}{\pi \imath}\bigg(
 	\frac{\sin (q-p+2\beta)+\sin p}{z-e^{\imath \alpha}}-\frac{\sin p+\sin (p-q+2\beta)}{z+1}
 	\\&+\frac{\sin (p-q+2\beta)- \sin p}{z+e^{\imath \alpha}}+\frac{\sin p-\sin (q-p+2\beta)}{z-1}\bigg).\end{split}
 	\]
 	Now we set
 	$$u(z) = \Re \int_0^z \varphi_1(\zeta) d\zeta,$$
 	$$v(z)=\Re \int_0^z \varphi_2(\zeta) d\zeta.$$
 	Let $$P(r,\psi-t) = \frac{1}{2\pi}\frac{1-r^2}{1+r^2-2r\cos(\psi-t)}.$$ From the Poisson representation formula
 	
 	\[\begin{split}f(re^{\imath t})&=\int_{0}^{\alpha}P(r,\psi-t)e^{\imath x} d\psi+
 	\int_{\alpha}^{\pi}P(r,\psi-t) d\psi
 	\\&+\int_{\pi}^{\pi+\alpha}P(r,\psi-t)e^{\imath s} d\psi+
 	\int_{\pi+\alpha}^{2\pi}P(r,\psi-t)e^{\imath p} d\psi \end{split}\]
 	we have
 	\[\begin{split}f(0)&=\frac{\alpha(e^{\imath x}+e^{\imath s})+(\pi-\alpha)(e^{\imath y}+e^{\imath p})}{2\pi}
 	\\&=\frac{\alpha e^{2\imath\beta}\cos(p-q)+(\pi-\alpha)\cos p}{\pi}.\end{split}
 	\]
 	To determine $u$ and $v$ we will first evaluate
 	$$ \Im \int_0^z \frac{1}{\zeta-e^{\imath \gamma}} d\zeta,$$
 	for $\gamma \in (0,2\pi).$
 	This is the content of the following claim.
 	\begin{claim}
 		$$\Im \int_0^z \frac{1}{\zeta-e^{\imath \gamma}} d\zeta=\tan^{-1}\frac{r\sin (\gamma- t)}{1-r\cos(\gamma-t)},$$
 		
 		for $z=re^{\imath t}.$
 	\end{claim}
 	\begin{proof} We start from $$	 \frac{1}{\zeta-e^{\imath\gamma}}=\frac{-e^{-\imath\gamma}}{1-e^{-\imath\gamma}\zeta}=-e^{-\imath\gamma}\sum_{n=0}^{+\infty}\big(e^{-\imath\gamma}\zeta\big)^n. $$
 		Integrating this sum on the line segment $0$ to $z$, we get:
 		$$\int_0^z \frac{1}{\zeta-e^{\imath \gamma}} d\zeta=-e^{-\imath\gamma}\sum_{n=0}^{+\infty}e^{-\imath n\gamma}\frac{\zeta^{n+1}}{n+1}=\log\big(1-e^{-\imath\gamma}z\big).$$
 		Taking the imaginary parts, we get the desired conclusion.
 	\end{proof}
 	Using this claim and formulae for $\varphi_1$ and $\varphi_2$,
 	we finally find the closed form for $u$ and $v$ as it is given by \eqref{eq:u} and \eqref{eq:v}.
 	\end{proof}

\subsection{Explicit calculation of $a, b, \theta$}This subsection provides further calculations of previously introduced parameters as functions of $p$ and $q$. These computations constitute an important step toward deriving the final expression for the Gaussian curvature at the point lying above the origin.

From
\begin{equation}
\label{koef}
\begin{split}2\pi  \imath(z^2-1)(z^2&-e^{2\imath\alpha})h'(z)=z^2\big(e^{\imath\alpha}(e^{\imath x}-e^{\imath y})-(e^{\imath y}-e^{\imath s})-e^{\imath\alpha}(e^{\imath s}-e^{\imath p})+e^{\imath p}-e^{\imath x})\big)
\\&-z\big(e^{\imath x}-e^{\imath y}+e^{2\imath\alpha}(e^{\imath y}-e^{\imath s})+e^{\imath s}-e^{\imath p}+e^{2\imath\alpha}(e^{\imath p}-e^{\imath x})\big)
\\&-e^{\imath \alpha}(e^{\imath x}-e^{\imath y})+e^{2\imath\alpha}(e^{\imath y}-e^{\imath s})+e^{\imath \alpha}(e^{\imath s}-e^{\imath p})-e^{2\imath\alpha}(e^{\imath p}-e^{\imath x})
\end{split}
\end{equation}
and the expression for $h'$ we have:
\[\begin{split}2\pi \imath b&=e^{\imath \alpha}\bigg[e^{\imath s}-e^{\imath p}-e^{\imath x}+e^{\imath y}+e^{\imath\alpha}(e^{\imath y}+e^{\imath x}-e^{\imath p}-e^{\imath s})\bigg]
\\&=2\imath e^{\imath \alpha}\bigg[e^{2\imath\beta}\sin(p-q)-\sin p-e^{\imath\alpha}\bigg(\sin p +e^{2\imath\beta}\sin(p-q)\bigg)\bigg]
\\&=-4\imath e^{\frac{3\imath\alpha}{2}}\bigg[\sin p\cos\frac{\alpha}{2}-\sin(p-q)\sin(2\beta)\sin\frac{\alpha}{2}+\imath \cos(2\beta)\sin(p-q)\sin\frac{\alpha}{2}\bigg]
\end{split}
\]
and thus
$$b=\frac{e^{\imath\alpha}}{\pi}\bigg(e^{2\imath\beta}\sin(p-q)-\sin p-e^{\imath\alpha}\big(\sin p +e^{2\imath\beta}\sin(p-q)\big)\bigg),$$
i.e. \begin{equation}\label{formula}b=-\frac{e^{\imath \alpha } \left(\left(1+e^{\imath \alpha }\right) \sin p+e^{2 \imath \beta } \left(-1+e^{\imath \alpha }\right) \sin(p-q)\right)}{\pi }.\end{equation}
From \eqref{formula} we have
\begin{equation}\label{bsquare}\begin{split}|b|^2&=\frac{4}{\pi^2}\bigg(\big(\sin p\cos\frac{\alpha}{2}-\sin(p-q)\sin(2\beta)\sin\frac{\alpha}{2}\big)^2+\cos^2(2\beta)\sin^2(p-q)\sin^2\frac{\alpha}{2}\bigg)
\\&= \frac{2}{\pi^2}\bigg(2\sin^2(q-p)\sin^2\frac{\alpha}{2}+2\sin^2p\cos^2\frac{\alpha}{2}+2\sin p\sin(q-p)\sin(2\beta)\sin\alpha\bigg).                                        \end{split}\end{equation}
Inserting
\begin{equation}
\label{eq:sinalpha}
\sin\alpha=\frac{2\sqrt{\sin p\sin(q-p)}}{\sin p+\sin(q-p)}
\end{equation}
and
\begin{equation}
\label{eq:sinbeta}
\sin(2\beta)=\frac{\sin p+\sin(q-p)}{1+\sin(q-p)\sin p}
\end{equation}
in \eqref{bsquare}, we get
\[\begin{split}\frac{\pi^2}{2}|b|^2&= \frac{2\sin p\sin^2(q-p)}{\sin(q-p)+\sin p}+\frac{2\sin^2 p\sin(q-p)}{\sin(q-p)+\sin p}
\\&+2\sin p\sin(q-p)\frac{\sqrt{4\sin p\sin(q-p)}}{\sin p+\sin(q-p)}\frac{\sin p+\sin(q-p)}{1+\sin(q-p)\sin p}
\\&=  \frac{2\big(1+\sin(q-p)\sin p\big)\sin(q-p)\sin p+4\sin(q-p)\sin p\sqrt{\sin(q-p)\sin p}}{1+\sin(q-p)\sin p}                                                               \\&=\frac{2\big(\sin(q-p)\sin p+\sqrt{\sin(q-p)\sin p}\big)^2}{1+\sin(q-p)\sin p},\end{split}\]
i.e.
\begin{equation}
\label{eq:bsquare2}
|b|^2=\frac{4\big(\sin(q-p)\sin p+\sqrt{\sin(q-p)\sin p}\big)^2}{\pi^2(1+\sin(q-p)\sin p)}.
\end{equation}
The formula \eqref{koef} gives also a way to find $a$. Indeed, the linear coefficient is equal to $-2\overline{a}b.$ Hence, we have
\[\begin{split}-2\overline{a}b&=-\frac{1}{2\pi \imath}\big(1-e^{2\imath\alpha}\big)\big(e^{\imath x}+e^{\imath s}-e^{\imath p}-e^{\imath y}\big)
\\&= -\frac{1}{\pi}\big(1-e^{2\imath\alpha}\big)\big(e^{2\imath\beta}\cos(p-q)-\cos p\big).
\end{split}
\]
From formula \eqref{formula} we get
$$\overline{a}= \sin\alpha \cdot \frac{\cos p- e^{2\imath\beta}\cos(q-p)}{e^{2\imath\beta}\sin(p-q)(1-e^{\imath\alpha})-\sin p(1+e^{\imath\alpha})} .$$
By using  \eqref{eq:kosalpha}, \eqref{eq:sinalpha}, \eqref{eq:sinbeta} and $$\cos(2\beta)=\frac{\cos p\cos(q-p)}{1+\sin(q-p)\sin p},$$ after a long, but straightforward calculation, we get:

\begin{equation}
\label{eq:a}
a=\frac{\cos(q-p)-\cos p-\imath \sin q}{1-\cos q+2\sqrt{\sin p\sin(q-p)}+\imath\big(\sin(q-p)-\sin p\big)}.
\end{equation}

Some elementary identities
lead us to

\begin{equation}
\label{eq:modula}
|a|=\sqrt{\frac{1-\sqrt{\sin p\sin(q-p)}}{1+\sqrt{\sin p\sin(q-p)}}},
\end{equation}
thus giving a more suitable form of $a.$ In fact, by previous observations and some identities for trigonometric functions, we have:
\begin{equation}
\label{eq:abolje}
a=\sqrt{\frac{1-\sqrt{\sin p\sin(q-p)}}{1+\sqrt{\sin p\sin(q-p)}}}\big(\cos\delta+ \imath \sin\delta\big),
\end{equation}
where
\begin{equation}\begin{split}\label{eq:kosargumenta}&\cos\delta=
-\frac{\sin(\frac{q}{2}-p)}{\sin\frac{q}{2}\sqrt{1-\sin(q-p)\sin p}}\end{split}
\end{equation}
and
\begin{equation}\begin{split}\label{eq:sinargumenta}&\sin\delta
=-\frac{\cos\frac{q}{2}\sqrt{\sin(q-p)\sin p}}{\sin\frac{q}{2}\sqrt{1-\sin(q-p)\sin p}}.\end{split}
\end{equation}
One easily observe that the last two formulas are not well defined for $\sin p\sin(q-p)=1,$ i.e. $p=\frac{\pi}{2}, q=\pi.$ From now on, we will exclude this (trivial) case and assume $(p,q)\neq (\frac{\pi}{2},\pi).$\\

From \eqref{dila}  we obtain the following formula
\begin{equation}\label{hgzero}\begin{split}\frac{g'(0)}{h'(0)}&=\mathbf q^2(0)\\&=-e^{-\imath(2\beta+q)}M^2(0)\\&=\frac{e^{-2 \imath \beta } \left(-e^{2 \imath\beta } \left(1+e^{\imath \alpha }\right) \sin p-\left(-1+e^{\imath \alpha }\right) \sin(p-q)\right)}{\left(1+e^{\imath \alpha }\right) \sin p+e^{2 \imath \beta } \left(-1+e^{\imath \alpha }\right) \sin(p-q)}.\end{split}\end{equation}
Also, we have that
\begin{equation}
\label{eq:varphi3}
\varphi_3=-2\imath\mathbf{p}\mathbf{q}=-2\imath e^{\imath\theta}\frac{b(z-a)(1-z\overline{a})}{(z^2-1)(z^2-e^{2\imath\alpha})}.
\end{equation}
Now from \eqref{hgzero}, the formula $\mathbf{q}(0)=-ae^{\imath \theta}$ and previous relations we get  $$\theta =  \tan^{-1}[\cos(p-q) \tan(p)].$$
The third coordinate of our Scherk type surface is explicitly stated as a function depending on parameters $p$ and $q$: \begin{equation}\label{eq:T}\begin{split}T(z)&=\Re \int_0^z\varphi_3(\zeta)d\zeta=\Re\int_0^z(-2\imath \mathbf{p}(\zeta)\mathbf{q}(\zeta)d\zeta\\&=\Im\bigg[\frac{b e^{-\imath (\alpha -\theta )}}{e^{- \imath \alpha }-e^{ \imath \alpha }} \bigg(\left(1+|a|^2\right) \log[\frac{1-z^2}{1-e^{-2\imath \alpha } z^2}]\\&+4 \Re (a) \tanh^{-1}(z)-4\Re(ae^{-\imath \alpha})\tanh^{-1}\left[e^{-\imath \alpha } z\right] \bigg)\bigg].\end{split}\end{equation}

Based on \eqref{eq:u}, \eqref{eq:v} and \eqref{eq:T} we obtain a Scherk type surface $\varpi(z) = (u(z), v(z) , T(z))$ in Figure~\ref{21}.
\begin{figure}
\centering
\includegraphics{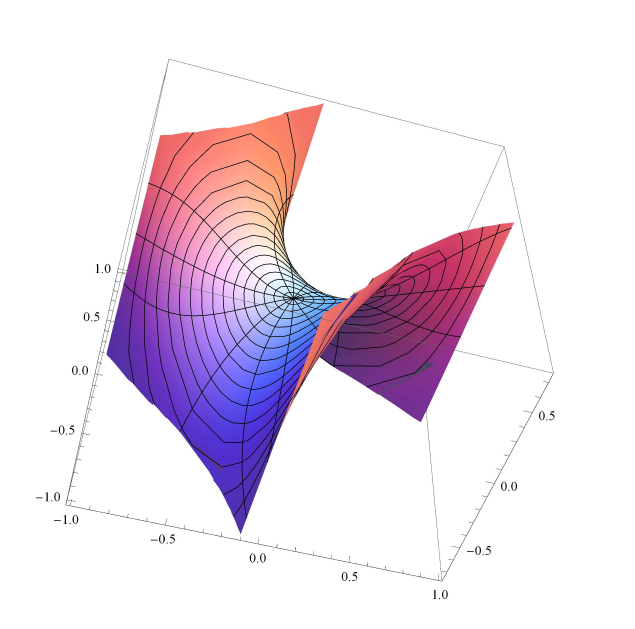}
\caption{A Scherk type surface for $p=\pi/2+0.1$, $q=\pi-0.1$. }
\label{21}
\end{figure}
over the quadrilateral shown in Figure~\ref{22}
\begin{figure}
\centering
\includegraphics{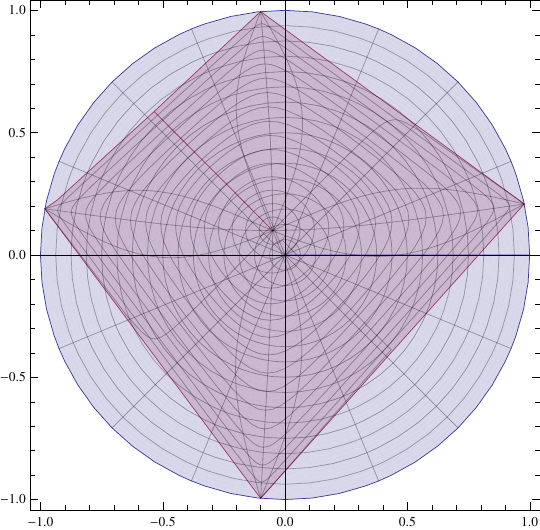}
\caption{A bicentric quadrilateral inscribed in the unit disk for $p=\pi/2+0.1$, $q=\pi-0.1$. }
\label{22}
\end{figure}
\section{Curvature of two-parameter Scherk type surfaces and the proof of Theorem~\ref{seconda}}\label{sec3}
After we gave the construction of two-parameter family of Scherk type surfaces, we continue with calculation of the curvature. Let us recall the formula for the curvature
\begin{equation*}
\label{eq:krivina1}
 \mathcal{K}=-\frac{4|\mathbf{q}'|^2}{|\mathbf{p}|^2(1+|\mathbf{q}|^2)^4}.
\end{equation*}
By using $|\mathbf{q}'|=\frac{1-|a|^2}{|1-\overline{a}z|^2}$ and formulae  \eqref{eq:pbold}, \eqref{eq:qbold} and \eqref{eq:modula}, we find that the Gaussian curvature at the point over the $0=f(z_\circ)$ is equal to
\begin{equation}
\label{eq:krivina2}
\mathcal{K} = -\frac{4(1-|a|^2)^2}{|b|^2} \frac{|z_\circ^2-1|^2 |z_\circ^2-e^{2\imath\alpha}|^2}{(|1-z_\circ\overline{a}|^2+|z_\circ-a|^2)^4}.
\end{equation}
Using already determined values of $a, b$ we get
\begin{equation}\label{eq:krivina3}\begin{split}-\mathcal{K}&=\frac{4 \pi^2(1+\sin p\sin(q-p))}{\left(1+\sqrt{\sin p\sin(q-p)}\right)^4} \frac{|1-z_\circ^2|^2 |1-z_\circ^2 e^{2\imath\alpha}|^2}{((1+|z_\circ|^2)(1+|a|^2) - 4 \Re (a\bar z_\circ))^4}
\\&=\frac{\pi^2}{4}\big(1+\sin p\sin(q-p)\big)\frac{|1-z_\circ^2|^2 |1-z_\circ^2 e^{2\imath\alpha}|^2}{((1+|z_\circ|^2) - 4 \frac{\Re (a\bar z_\circ)}{(1+|a|^2)})^4}.\end{split}
\end{equation}

The problem now reduces to maximizing the final expression over $(p, q) \in \mathcal{S}$. Note that the quantities $z_{\circ}$, $\alpha$, and $a$ also depend on $p$ and $q$. To establish the main inequality for the Gaussian curvature, our next objective is to localize the zero of the mapping $f$.

It turns out that it is sufficient to determine in which of the four sectors

$\{z : \arg z \in (0,\alpha)\}$ ,$ \{z : \arg z \in (\alpha,\pi)\}$, $ \{z : \arg z \in (\pi,\pi+\alpha)\},$ or $\{z : \arg z \in (\pi+\alpha,2\pi)\}$
(or on their boundaries) the zero of $f$ lies.

By analyzing the images of these sectors under $f$, we identify which boundary encloses the origin, or possibly passes through it. For instance, we conclude that the sector $\{z : \arg z \in (0,\alpha)\}$ contains $z_{\circ}$ if the image of its boundary under $f$ has a tangent vector whose argument varies monotonically. If $z_{\circ}$ lies on one of the rays $\arg z = \beta$, with $\beta \in \{0, \alpha, \pi, \pi+\alpha\}$, we determine this using continuity arguments.

A precise formulation of this preliminary observation is given in Lemma~\ref{mainlemma}.

Let us show that every sector is mapped by $f$ onto a curvilinear quadrilateral. Observe that the boundary function $G(t)=F(e^{\imath t})$ of the harmonic mapping is piecewise continuous but has four jump discontinuities: $(t_1,t_2,t_3,t_4)=(0,\alpha, \pi, \pi+\alpha)$, $t_5=t_1+2\pi$. Moreover $G(0-)=e^{\imath p}$, $G(0+)=e^{\imath x}$, $G(\alpha-)=e^{\imath x}$, $G(\alpha+)=e^{\imath y}$, $G(\pi-)=e^{\imath y}$, $G(\pi+)=e^{\imath s}$, $G(\pi+\alpha-)=e^{\imath s}$, $G(\pi+\alpha+)=e^{\imath p}$.

Then $$B_k=\lim_{r\to 1}f(r e^{\imath t_k})=\frac{1}{2}\left(G(t_k-)+G(t_k+)\right), k=1,2,3,4,$$ and put $B_5=B_ 1$.
Let $A_1=e^{\imath p}$, $A_2=e^{\imath x}$, $A_3=e^{\imath y}$, $A_4=e^{\imath s}$, $A_5=A_1$. This is a basic fact but we also refer to \cite[p.~13]{DUREN}.

The boundary of the sector $\arg z \in (t_k, t_{k+1})$, for $k = 1, 2, 3, 4$, consists of two curves $\gamma_k$ and $\gamma_{k+1}$ (with $\gamma_5 := \gamma_1$), both emanating from the common point $f(0)$ and terminating at the points $B_k$ and $B_{k+1}$, respectively. These are connected by linear segments $B_kA_k$ and $A_kB_{k+1}$, forming a closed contour. The image of the sector under the mapping $f$ is a domain denoted by $Q_k$, which is bounded by the curves $\gamma_k$, $\gamma_{k+1}$, and the line segments $B_kA_k$ and $A_kB_{k+1}$.

The quadrilateral $Q(e^{\imath p}, e^{\imath x}, e^{\imath y}, e^{\imath s})$ can thus be expressed as the union of these four domains:
\[
Q(e^{\imath p}, e^{\imath x}, e^{\imath y}, e^{\imath s}) = \bigcup_{k=1}^4 \overline{Q_k} \quad \text{(see Figure~\ref{quad})}.
\]

Our goal is to determine which of the domains \( Q_k \), for \( k = 1, 2, 3, 4 \), or their boundaries, contains a zero of the function \( f \). To this end, we examine the curves
\[
\delta_k = \gamma_k + \gamma_{k+1}^{-}, \quad k = 1, 2, 3, 4,
\]
where the superscript "$-$" denotes the reverse orientation of the curve.

A curve in the codomain is said to encircle the origin if the angle of its tangent vector varies monotonically-either increasing or decreasing-indicating that it winds around the origin in a positive or negative sense, respectively. To identify which curve \( \delta_k \) encircles zero, we determine which of these curves has a tangent vector field that changes direction monotonically.

For a fixed pair \( (p,q) \neq \left( \frac{\pi}{2}, \pi \right) \), there is exactly one sector, or the segment between two adjacent sectors, that contains the zero \( z_\circ \).

We will determine it in the next lemma.
\begin{lemma}[Localization of the zero $z_\circ$]\label{mainlemma}
Assume that $(p,q)$ is a pair of real numbers that belongs to the domain $\mathcal{S}$ defined in \eqref{QQ}  and assume that $z_\circ$ is the zero of $f$. Then we have
\begin{enumerate}
\item[a)] $\arg z_\circ \in (\alpha,\pi),$ for $q<\pi, q< 2p,$
\item[b)] $\arg z_\circ \in (\pi+\alpha,2\pi),$ for $q>\pi, q> 2p,$
\item[c)] $\arg z_\circ \in (\pi,\pi+\alpha),$ for $q \in (\pi,2p),$
\item[d)] $\arg z_\circ \in (0,\alpha)$, for $q \in (2p,\pi).$
\end{enumerate}
By using continuity of $f,$ we also conclude:
\begin{enumerate}
\item[$a')$] $\arg z_\circ=0,$ for $q=\pi$ and $p<\frac{\pi}{2},$
\item[$b')$] $\arg z_\circ=\alpha,$ for $q=2p$ and $p<\frac{\pi}{2},$
\item[$c')$] $\arg z_\circ=\pi,$ for $q=\pi$ and $p>\frac{\pi}{2},$
\item[$d')$] $\arg z_\circ=\pi+\alpha$, for $q=2p$ and $p>\frac{\pi}{2}.$
\end{enumerate}
\end{lemma}
The only case that is not considered by $a)-d)$ or $a')-d')$ is $p=\frac{\pi}{2}$ and $q=\pi,$ when $\alpha=\frac{\pi}{2},$ $z_{\circ}=0$ and $\big|\mathcal{K}\big|=\frac{\pi^2}{2},$ which makes this case trivial. The proof of Lemma~\ref{mainlemma} is presented in the following four subsections with an easy argument of the concluding the cases $a')-d')$ from them.
\begin{figure}
\centering
\includegraphics{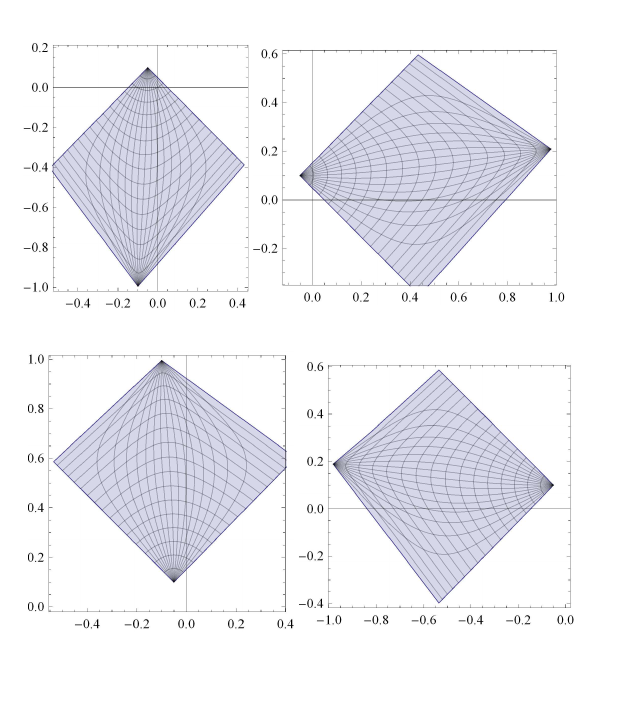}
\caption{The images of four consecutive sectors, $\arg z\in (\alpha,\pi)$, $(\pi, \pi+\alpha)$, $(\pi+\alpha,2\pi)$ and $(0,\alpha)$ under the harmonic mapping $f$.
In this case $p=\pi/2+0.1$, $q=\pi-0.1$, so $q<\min\{2p,\pi\}$. Notice that the first curvilinear quadrilateral contains the zero, and we are in the case a).}
\label{quad}
\end{figure}
\subsubsection{The case a)} From the formulae for $u$ and $v$ (\eqref{eq:u}, \eqref{eq:v}), we find:
\[\begin{split}u(re^{\imath\alpha})&=\frac{\cos p-\cos(p-q+2\beta)}{\pi}\tan^{-1}\frac{r\sin\alpha}{1+r\cos\alpha}
\\&+
\frac{\cos(q-p+2\beta)-\cos p}{\pi}\tan^{-1}\frac{r\sin\alpha}{1-r\cos\alpha}\end{split}\]
and
\[\begin{split}v(re^{\imath\alpha})&=-\frac{\sin p+\sin(p-q+2\beta)}{\pi}\tan^{-1}\frac{r\sin\alpha}{1+r\cos\alpha}
\\&+
\frac{\sin(q-p+2\beta)-\sin p}{\pi}\tan^{-1}\frac{r\sin\alpha}{1-r\cos\alpha},\end{split}\]
while
\[\begin{split}u(-r)&=\frac{\cos p-\cos(q-p+2\beta)}{\pi}\tan^{-1}\frac{r\sin\alpha}{1+r\cos\alpha}
\\&+
\frac{\cos(p-q+2\beta)-\cos p}{\pi}\tan^{-1}\frac{r\sin\alpha}{1-r\cos\alpha}\end{split}\]
and
\[\begin{split}v(-r)&=-\frac{\sin p+\sin(q-p+2\beta)}{\pi}\tan^{-1}\frac{r\sin\alpha}{1+r\cos\alpha}\\&+
\frac{\sin(p-q+2\beta)-\sin p}{\pi}\tan^{-1}\frac{r\sin\alpha}{1-r\cos\alpha}.\end{split}\]
Now, we consider the following quantity:
\[\begin{split}\Psi(r,\varphi)&=\arg \bigg( \frac{\partial f(re^{\imath \varphi})}{\partial r} \bigg)
      \\&=\arg \bigg(\frac{\partial u(re^{\imath \varphi})}{\partial r}+ \imath \frac{\partial v(re^{\imath \varphi})}{\partial r} \bigg)
       \\&= \tan^{-1} \bigg(\frac{\frac{\partial v(re^{\imath \varphi})}{\partial r}}{\frac{\partial u(re^{\imath \varphi})}{\partial r}}\bigg).
       \end{split}
       \]
The monotonicity of the argument provides an important property of the tangent vector to the curve \( f(re^{\imath\varphi}) \), where \( \varphi \) is fixed. Specifically, the monotonic behavior of the argument of the tangent vector indicates that the curve winds around the origin in one of two possible orientations: counterclockwise if the argument is increasing, and clockwise if it is decreasing. Further, we have
\[\begin{split}\frac{\partial }{\partial r}u(re^{\imath\alpha})&=\frac{\cos p-\cos(p-q+2\beta)}{\pi}\frac{\sin\alpha}{1+r^2+2r\cos\alpha}
\\&+
\frac{\cos(q-p+2\beta)-\cos p}{\pi}\frac{\sin\alpha}{1+r^2-2r\cos\alpha},\end{split}\]
\[\begin{split}\frac{\partial }{\partial r}v(re^{\imath\alpha})&=-\frac{\sin p+\sin(p-q+2\beta)}{\pi}\frac{\sin\alpha}{1+r^2+2r\cos\alpha}
\\&+
\frac{\sin(q-p+2\beta)-\sin p}{\pi}\frac{\sin\alpha}{1+r^2-2r\cos\alpha}.\end{split}\]
The function $\Psi$ has the same monotonicity as the function $\tan\Psi,$ hence we will find the derivative of
$$\frac{\frac{\partial v(re^{\imath \varphi})}{\partial r}}{\frac{\partial u(re^{\imath \varphi})}{\partial r}}=\frac{P}{Q},$$
where  $$P=A(1+r^2+2r\cos\alpha)+B(1+r^2-2r\cos\alpha),$$ $$Q=C(1+r^2+2r\cos\alpha)+D(1+r^2-2r\cos\alpha),$$
with:
$$A=  \sin(q-p+2\beta)-\sin p=2\sin(\frac{q}{2}-p+\beta)\cos(\beta+\frac{q}{2}),$$
$$B=-\sin p-\sin(p-q+2\beta)=-2\sin(p+\beta-\frac{q}{2})\cos(\frac{q}{2}-\beta), $$
$$C=\cos(q-p+2\beta)-\cos p=-2\sin(\beta+\frac{q}{2})\sin(\frac{q}{2}+\beta-p),  $$
$$ D=\cos p-\cos(p-q+2\beta)=-2\sin(p+\beta-\frac{q}{2})\sin(\beta-\frac{q}{2}).$$
The straightforward calculation gives
\[\begin{split}&P'Q-Q'P\\&=\big(2A(r+\cos\alpha)+2B(r-\cos\alpha)\big)\bigg(C(1+r^2+2r\cos\alpha)+D(1+r^2-2r\cos\alpha)\bigg)
\\&-\big(2C(r+\cos\alpha)+2D(r-\cos\alpha)\big)\bigg(A(1+r^2+2r\cos\alpha)+B(1+r^2-2r\cos\alpha)\bigg)
\\&=4(1-r^2)(AD-BC)\cos\alpha.\end{split}\]
For this range of $p$ and $q,$ we have $\cos\alpha=\frac{\tan(\frac{q}{2}-p)}{\tan\frac{q}{2}} \leq 0,$ since $0\leq \frac{q}{2}<\frac{\pi}{2}$ and $-\frac{\pi}{2}<\frac{q}{2}-p< 0.$ Also,
\[\begin{split}AD-BC&=4\sin(\beta+\frac{q}{2}+\beta-p)
\sin(\beta+p-\frac{q}{2})\\&\times \bigg(-\sin(\beta-\frac{q}{2})\cos(\beta+\frac{q}{2})-\cos(\beta-\frac{q}{2})\sin(\beta+\frac{q}{2})\bigg)
\\&=-4\sin(\beta+\frac{q}{2}-p)\sin(\beta+p-\frac{q}{2})\sin(2\beta)< 0,\end{split}\]
because $\beta+\frac{q}{2}-p,\beta+p-\frac{q}{2} \in (0,\pi)$ and $\beta \in (0,\frac{\pi}{2}).$

We conclude $P'Q-Q'P> 0$, i.e. the tangent vector at $f(re^{\imath \alpha})$ has an increasing argument. (This follows from \eqref{eq:poredak}; it will be also important for the other cases.)

For the second part of the curve, we have:
\[\begin{split}\frac{\partial }{\partial r}u(-r)&=\frac{\cos p-\cos(q-p+2\beta)}{\pi}\frac{\sin\alpha}{1+r^2+2r\cos\alpha}
\\&+
\frac{\cos(p-q+2\beta)-\cos p}{\pi}\frac{\sin\alpha}{1+r^2-2r\cos\alpha},\end{split}\]
\[\begin{split}\frac{\partial }{\partial r}v(-r)&=-\frac{\sin p+\sin(q-p+2\beta)}{\pi}\frac{\sin\alpha}{1+r^2+2r\cos\alpha}
\\&+
\frac{\sin(p-q+2\beta)-\sin p}{\pi}\frac{\sin\alpha}{1+r^2-2r\cos\alpha}.\end{split}\]

This time, we consider  $$\tan\Psi=\frac{\frac{\partial }{\partial r}v(-r)}{\frac{\partial }{\partial r}u(-r)}=\frac{P}{Q}=\frac{A(1+r^2+2r\cos\alpha)+B(1+r^2-2r\cos\alpha}{C(1+r^2+2r\cos\alpha)+D(1+r^2-2r\cos\alpha)},$$ with:
$$A=\sin(p-q+2\beta)-\sin p=2\sin(\beta-\frac{q}{2})\cos(p+\beta-\frac{q}{2}),$$
$$B=-\sin p-\sin(q-p+2\beta)=-2\sin(\beta+\frac{q}{2})\cos(p-\beta-\frac{q}{2}),$$
$$C=\cos(p-q+2\beta)-\cos p=-2\sin(p+\beta-\frac{q}{2})\sin(\beta-\frac{q}{2}),$$
$$D=\cos p-\cos(q-p+2\beta)=-2\sin(\beta+\frac{q}{2})\sin(p-\beta-\frac{q}{2}).$$
Since we work on the same range of $p$'s and $q$'s, $\cos\alpha<0.$ Here we have:
\[\begin{split}AD-BC
&=-4\sin(\beta+\frac{q}{2})\sin(\beta-\frac{q}{2})\\& \times\bigg(\sin(p-\beta-\frac{q}{2})\cos(p+\beta-\frac{q}{2})+\cos(p-\beta-\frac{q}{2})\sin(p+\beta-\frac{q}{2})\bigg)
\\&=-4\sin(\beta+\frac{q}{2})\sin(\beta-\frac{q}{2})\sin(2p-q)>0,\end{split}\]
because $\frac{q}{2}-\beta \in (0,p), \beta+\frac{q}{2} \in (p,\pi)$ and $2p-q \in (0,\pi).$
This implies $P'Q-Q'P=4(1-r^2)(AD-BC)\cos\alpha<0.$

Finally, we can conclude that the argument of the tangent vector of $f(re^{\imath\varphi})$ is increasing for $\varphi=\alpha$, while it is decreasing for $\alpha=\pi,$ considered as a function on $r.$ This exactly means that the whole curve, which consists of these two parts, which starts at $f(-1),$ goes as $f(-r)$ till $f(0)$ and then continues as $f(re^{\imath\alpha})$ till the end in $f(e^{\imath\alpha})$ has the tangent vector with an increasing argument i.e. it goes around the $0.$ This implies that $\arg z_\circ \in (\alpha,\pi).$
\subsubsection{The case b)} Let $q>\max\{\pi, 2p\}.$ Here we consider $f(re^{\imath\varphi})$ with $\varphi=0$ and $\varphi=\pi+\alpha.$ We easily find that
\[\begin{split}\frac{\partial}{\partial r}u(r)&=\frac{\cos(q-p+2\beta)-\cos p}{\pi}\frac{\sin\alpha}{1+r^2-2r\cos\alpha}
\\&+\frac{\cos p-\cos(p-q+2\beta)}{\pi}\frac{\sin\alpha}{1+r^2+2r\cos\alpha},\end{split}\]

\[\begin{split}\frac{\partial}{\partial r}v(r)&=\frac{\sin(q-p+2\beta)+\sin p}{\pi}\frac{\sin\alpha}{1+r^2-2r\cos\alpha}
\\&+\frac{\sin p-\sin(p-q+2\beta)}{\pi}\frac{\sin\alpha}{1+r^2+2r\cos\alpha}\end{split}\]
and in the expression for $$\tan\Psi=\frac{P}{Q}=\frac{A(1+r^2+2r\cos\alpha)+B(1+r^2-2r\cos\alpha)}{C(1+r^2+2r\cos\alpha)+D(1+r^2-2r\cos\alpha)},$$ we get:
$$A=\sin(q-p+2\beta)+\sin p=2\sin(\beta+\frac{q}{2})\cos(\beta+\frac{q}{2}-p),$$
$$B=\sin p-\sin(p-q+2\beta)=2\sin(\frac{q}{2}-\beta)\cos(p+\beta-\frac{q}{2}),$$
$$C=\cos(q-p+2\beta)-\cos p=-2\sin(\beta+\frac{q}{2})\sin(\frac{q}{2}+\beta-p),$$
$$D=\cos p-\cos(p-q+2\beta)=2\sin(\beta-\frac{q}{2})\sin(p+\beta-\frac{q}{2}).$$
For $q>\pi, q>2p$ we have $\cos\alpha=\frac{\tan(\frac{q}{2}-p)}{\tan\frac{q}{2}}<0,$ while:
\[\begin{split}AD-BC&=4\sin(\beta+\frac{q}{2})\sin(\beta-\frac{q}{2})\\&\times \bigg(\sin(p+\beta-
\frac{q}{2})\cos(\beta+\frac{q}{2}-p)-\sin(\frac{q}{2}+\beta-p)\cos(p+\beta-\frac{q}{2})\bigg)
\\&=4\sin(\beta+\frac{q}{2})\sin(\beta-\frac{q}{2})\sin(2p-q)>0.\end{split}\]
Therefore, $P'Q-Q'P=4(1-r^2)(AD-BC)\cos\alpha<0.$

Similarly,
$$\frac{\partial}{\partial r}u(re^{\imath(\pi+\alpha)})=\frac{\cos(p-q+2\beta)-\cos p}{\pi}\frac{\sin\alpha}{1+r^2-2r\cos\alpha}$$
$$+\frac{\cos p-\cos(q-p+2\beta)}{\pi}\frac{\sin\alpha}{1+r^2+2r\cos\alpha},$$
$$\frac{\partial}{\partial r}v(re^{\imath(\pi+\alpha)})=\frac{\sin(p-q+2\beta)+\sin p}{\pi}\frac{\sin\alpha}{1+r^2-2r\cos\alpha}$$
$$+\frac{\sin p-\sin(q-p+2\beta)}{\pi}\frac{\sin\alpha}{1+r^2+2r\cos\alpha}$$
and for $\frac{P}{Q}$ we get the analogous expression, but with different values of the coefficients $(A,B,C,D)$:
$$A=\sin(p-q+2\beta)+\sin p=2\sin(p+\beta-\frac{q}{2})\cos(\frac{q}{2}-\beta),$$
$$B=\sin p-\sin(q-p+2\beta)=2\sin(p-\frac{q}{2}-\beta)\cos(\frac{q}{2}+\beta),$$
$$C=\cos(p-q+2\beta)-\cos p=-2\sin(p+\beta-\frac{q}{2})\sin(\beta-\frac{q}{2}),$$
$$D=\cos p-\cos(q-p+2\beta)=-2\sin(\beta+\frac{q}{2})\sin(p-\beta-\frac{q}{2}).$$
For this part of curve, we get:
\[\begin{split}AD-BC&=-4\sin(p-\beta-\frac{q}{2})\sin(p+\beta-\frac{q}{2})\\ &\times \bigg(-\sin(\beta+\frac{q}{2})\cos(\beta-\frac{q}{2})+\cos(\beta+\frac{q}{2})\sin(\beta-\frac{q}{2})\bigg)
\\&=-4\sin(p-\beta-\frac{q}{2})\sin(p+\beta-\frac{q}{2})\sin q<0,\end{split}\]
thus $P'Q-Q'P>0.$

The above calculations in this case allow us to conclude that the tangent vector of the curve, starting from $f(e^{\imath(\pi+\alpha)})$ and moving along $f(re^{\imath(\pi+\alpha)})$ through $f(0)$ and then along $f(r)$ ending in $f(1)$ has a decreasing argument, therefore $\arg z_\circ \in (\pi+\alpha, 2\pi).$
\subsubsection{The case c), i.e.  the case $q \in (\pi, 2p)$} Proof of this case is similar. However, we will prove it using the earlier calculations. Namely
in the previous cases, we already computed the coefficients $A, B, C, D$ for all $f(re^{\imath\varphi}), $ with $\varphi=0,\alpha,\pi,\pi+\alpha.$

For $\varphi=\pi,$ we have $AD-BC=-4\sin(\beta+\frac{q}{2})\sin(\beta-\frac{q}{2})\sin(2p-q)>0$ and $\cos\alpha>0,$ giving $P'Q-Q'P>0.$

On the other side, for $\varphi=\pi+\alpha,$ the same quantities are $AD-BC=-4\sin(p+\beta-\frac{q}{2})\sin(p-\beta-\frac{q}{2})\sin q<0$ and $\cos\alpha>0,$ which implies $P'Q-Q'P< 0.$

We conclude now that the tangent vector of the curve starting from $f(-1)$ and moving along $f(-r)$ till $f(0)$ and then along  $f(re^{\imath(\pi+\alpha)})$ till $f(e^{\imath(\pi+\alpha)})$ has a decreasing argument, hence $\arg z_\circ \in (\pi,\pi+\alpha).$

\subsubsection{The case d), i.e.  $q \in (2p,\pi)$}
For $\varphi=0,$ we get $AD-BC=4\sin(\beta+\frac{q}{2})\sin(\beta-\frac{q}{2})\sin(2p-q)>0 $ and $\cos\alpha>0, $ which gives $P'Q-Q'P>0.$ The second part, i.e. for $\varphi=\alpha,$ $AD-BC=-4\sin(2\beta)\sin(\beta+\frac{q}{2}-p)\sin(\beta+p-\frac{q}{2})<0$ implies $P'Q-Q'P<0.$

The curve with starting point $f(1)$, crossing through the point $f(0)$ and ending in $f(e^{\imath\alpha})$ going along $f(r)$ and $f(re^{\imath\alpha})$, respectively, has an increasing argument. Thus $\arg z_\circ \in (0, \alpha).$

By using continuity and limiting process we can conclude the same for $q=2p$ or $q=\pi.$ For example, if $q=\pi$ and $p<\frac{\pi}{2},$ we have $q=\pi>2p,$ which holds in some open neighborhood $\mathcal{O}$ of $(p,q);$ therefore $\mathcal{O}$ has non-empty intersection with the sets $q>\pi, q>2p$ and $q<\pi, q>2p.$ For those sets $\arg z_{\circ} \in (\pi+\alpha, 2\pi)$ or $\arg z_{\circ} \in (0, \alpha),$ respectively, and their joint boundary satisfies $\arg z_{\circ}=0,$ thus giving the argument of the zero $z_{\circ}$ in this case. The same reasoning applies for $b')-d')$.

The same conclusion can be drawn from the expression in \eqref{eq:skalprod}, where one of the summands inside the brackets is zero and the other is non-negative.

Observe that when $q = 2p$, we have $e^{\imath p}e^{\imath s} = e^{\imath x}e^{\imath y}$, implying that the image quadrilateral is a trapezoid. Similarly, in the case $q = \pi$, the relation $e^{\imath p}e^{\imath x} = e^{\imath y}e^{\imath s}$ also leads to a trapezoidal image.

The case in which the image is a deltoid was mentioned in \cite{NITSCHE2}, although no proof was provided. In our setting, this corresponds to the condition $q - p = \frac{\pi}{2}$.

\begin{proof}[Proof of Theorem~\ref{seconda}]  Given the Scherk type minimal surface  $S:(u,v,\mathbf{f}(u,v))$ let $\tilde f$ be the projection of conformal parametrization (i.e. of  Enneper--Weierstrass representation of the minimal graph
$\varpi=(u,v,T):\mathbb{D} \to S\subset \mathbb{D}\times \R$). Then $\tilde f= (u,v)$ is a harmonic mapping of the unit disk onto $Q(a_0,a_1,a_2,a_3)$, which is certainly induced by a step mapping of the unit disk onto the vertices of the quadrilateral. There are four distinct points $b_0, b_1, b_2$, and $b_3$ on the unit circle whose cluster sets under $\tilde{f}$ are the line segments $[a_0, a_1]$, $[a_1, a_2]$, $[a_2, a_3]$, and $[a_3, a_0]$, respectively.
 Then there exists a  unique M\"obius transform $M$, which maps the points  $1,e^{\imath\alpha}, -1, -e^{\imath\alpha}$ onto the points $b_0,b_1,b_2$ and $b_3$. Then the mapping $ f = \tilde f\circ M$ satisfies the condition of Section 2. In other words we are in position to use constants  $a$ and $z_\circ$, where $ f(z_\circ)=0$.
Using Lemma~\ref{mainlemma} (the  localization of $z_\circ=r_0e^{\imath t_\circ}$) and formulae for $a$ given by \eqref{eq:abolje}, \eqref{eq:kosargumenta} and \eqref{eq:sinargumenta}, we find:
\begin{equation}
\label{eq:skalprod}\begin{split}
\Re (z_\circ\overline{a})&= |z_\circ||a|\big(\cos t_\circ \cos\delta+\sin t_\circ\sin\delta\big)\\
\\&= -\frac{|z_\circ||a|}{\sin\frac{q}{2}\sqrt{1-\sin p\sin(q-p)}}\\ &\times\bigg(\sin(\frac{q}{2}-p)\cos t_\circ +\cos\frac{q}{2}\sin t_\circ\sqrt{\sin p\sin(q-p)}\bigg)\leq 0.\end{split}
\end{equation}
Indeed, for $q<\min\{\pi, 2p\}$ we have $t_\circ \in (\alpha, \pi)$ and, since $\cos\alpha=\frac{\tan(\frac{q}{2}-p)}{\tan\frac{q}{2}}<0$ we conclude $t_\circ \in (\frac{\pi}{2},\pi)$ and $\sin t_\circ>0, \cos t_\circ<0.$ In this case, $\sin(\frac{q}{2}-p)<0$ and $\cos\frac{q}{2}>0$, hence the inequality follows.

Similarly, for $q >\max\{\pi, 2p\}$ we have found that $t_\circ \in (\pi+\alpha, 2\pi),$ and, by $\cos\alpha=\frac{\tan(\frac{q}{2}-p)}{\tan\frac{q}{2}} <0$ it follows that $t_\circ \in (\frac{3\pi}{2},2\pi)$ and $\sin t_\circ<0, \cos t_\circ>0.$ Also, $\sin(\frac{q}{2}-p)>0$ and $\cos\frac{q}{2}<0$ and we are done again.

The same reasoning works for the two remaining cases. If $q \in (\pi, 2p)$ then $t_\circ \in (\pi, \pi+\alpha),$ while $\cos\alpha=\frac{\tan(\frac{q}{2}-p)}{\tan\frac{q}{2}}>0$ gives $\alpha \in (0,\frac{\pi}{2})$ and $t_\circ \in (\pi, \frac{3\pi}{2})$, thus $\sin t_\circ<0, \cos t_\circ<0.$ Now,
$\sin(\frac{q}{2}-p)<0$ and $\cos\frac{q}{2}<0$ implies the non-positivity of $\Re (z_\circ\overline{a}).$ If $q \in (2p,\pi)$ then $t_\circ \in (0, \alpha),$ with $\cos\alpha=\frac{\tan(\frac{q}{2}-p)}{\tan\frac{q}{2}}>0$ which leads to $t_\circ \in (0,\frac{\pi}{2})$ and $\sin t_\circ>0$ and $\cos t_\circ>0.$ The conclusion follows from $\sin(\frac{q}{2}-p)>0$ and $\cos\frac{q}{2}>0.$ By using continuity, similarly as in the proof of the previous lemma, we get the conclusions in cases $a')-d').$

From the formula \eqref{eq:krivina3} we get
$$|\mathcal{K}| \leq \frac{\pi^2}{2}\frac{|1-z_\circ^2|^2|z_\circ^2-e^{2\imath\alpha}|^2}{(1+|z_\circ|^2)^4}\leq \frac{\pi^2}{2},$$
which was to be proved.
\end{proof}
\subsection*{Acknowledgements}
We are very thankful to the referee for their careful reading of the paper and for their substantial suggestions and insightful comments, which have significantly improved the quality of our work.
The first author was supported by the Ministry of Education, Science and Innovation of Montenegro through the grant \emph{Mathematical Analysis, Optimization and Machine Learning}. Additionally, the first author received partial support from the research fund of the University of Montenegro. The second author was partially supported by the Ministry of Education, Science and Technological Development of Serbia under grant no. 174017. The authors would like to thank Franc Forstneri\v c for fruitful and inspiring conversation and his substantial observations and Antonio Ros for encouraging to work on this problem. They also wish to thank Kristian Seip, Anti Per\"al\"a, Igor Uljarevi\'c and Darko Mitrovi\'c for a number of useful remarks.

\noindent David Kalaj

\noindent University of Montenegro, Faculty of Natural Sciences and Mathematics, 81000, Podgorica, Montenegro

\noindent e-mail: {\tt davidk@ucg.ac.me}

\noindent Petar Melentijevi\'c

\noindent University of Belgrade, Faculty of Mathematics, 11000, Belgrade, Serbia,

\noindent e-mail: {\tt petarmel@matf.bg.ac.rs}

\end{document}